\documentclass[preprint]{elsarticle}
\usepackage{aliascnt,amsmath,amssymb,amsthm,colonequals,enumitem,mathtools,mleftright,stmaryrd}
\usepackage[unicode]{hyperref}

\newcommand{\newaliastheorem}[2]{%
  \newaliascnt{#1}{theorem}
  \newtheorem{#1}[#1]{#2}
  \aliascntresetthe{#1}
  \expandafter\def\csname #1autorefname\endcsname{#2}
}

\theoremstyle{plain}
\newtheorem{theorem}{Theorem}
\newaliastheorem{lemma}{Lemma}
\newaliastheorem{proposition}{Proposition}
\newaliastheorem{corollary}{Corollary}

\newcommand{\tuple}[1]{\left\langle #1 \right\rangle}

\newcommand{\Ord}{\mathrm{Ord}}

\DeclareMathOperator{\dd}{def}
\DeclareMathOperator{\trcl}{trcl}

\DeclareMathOperator{\din}{\dot{\in}}
\DeclareMathOperator{\dnin}{\dot{\not\in}}

\DeclareMathOperator{\deq}{\dot{=}}
\DeclareMathOperator{\dseq}{\dot{\subseteq}}

\begin{document}

\begin{frontmatter}
  \title{An equiconsistency proof for \texorpdfstring{$\mathrm{CZF} + V = L$}{CZF + V = L}}
  \author{Shuwei Wang}
  \ead{mmsw@leeds.ac.uk}
  \affiliation{organization={School of Mathematics, University of Leeds},city={Leeds},citysep={},postcode={LS2 9JT},country={UK}}

  \begin{abstract}
    In many axiomatic set theories, G\"odel's constructible universe $L$ is known as an inner model, that is, a definable class satisfying the same axioms (and containing the same ordinals). This gives a trivial proof that adding the axiom $V = L$ does not increase the consistency strength of the theory. In this paper, we shall look at a system of intuitionistic set theory known as $\mathrm{CZF}$, where $L$ fails to exhibit such nice properties. We will demonstrate that, here, the theory $\mathrm{CZF} + V = L$ is still equiconsistent with $\mathrm{CZF}$, but the proof will involve a much more complicated realisability model and a recursion-theoretic argument.
  \end{abstract}

  \begin{keyword}
    constructive set theory \sep ordinal \sep constructible universe \sep realisability \sep Turing computability
    \MSC[2020] Primary: 03E70 \sep 03F25 \sep Secondary: 03F55 \sep 03D10
  \end{keyword}
\end{frontmatter}

It is established in Lubarsky's paper \cite{lubarsky93-intuitionistic-l} that the intuitionistic fragment of Kripke--Platek set theory, usually denoted $\mathrm{IKP}$, suffices for constructing G\"odel's constructible universe $L$. This is the first-order theory consisting of the axioms of extensionality, empty set, pairing, union, the axiom schemes of set induction, $\Delta_0$-comprehension and $\Delta_0$-collection as well as the axiom of strong infinity\footnote{Some formulations of $\mathrm{IKP}$ would omit the infinity axiom. In \cite{matthews-rathjen24-constructible-universe}, it is shown that when $L$ is constructed alternatively through Barwise's fundamental operations, the axiom of infinity is not necessary, and the resulting class is precisely the same as the usual construction through definable subsets. In this paper, we will be working with extensions of $\mathrm{IKP}$ that always admit the axiom of infinity, thus we shall stick to the conventional definition instead.}. Since intuitionistic set theories do not prove the classification of ordinals into zero, successors and limits, i.e.\ that every ordinal is either $\varnothing$, a successor or a non-empty set closed under successors, it is usually more appropriate to form recursive definitions with a uniform clause for all ordinals, i.e.
\[L_\alpha = \bigcup_{\beta \in \alpha} \dd\mleft(L_\beta\mright)\]
for all $\alpha \in \Ord$ and $L = \bigcup_{\alpha \in \Ord} L_\alpha$, where $\dd\mleft(X\mright)$ is the collection of definable sets in the first-order structure $\tuple{X; \in}$. Lubarsky also showed that, despite requiring a more complicated proof in the intuitionistic setting (due to the fact that the classical statement $\forall \alpha \in \Ord \ \alpha = L_\alpha \cap \Ord$ is no longer a theorem), we still have
\begin{theorem}[$\mathrm{IKP}$, \cite{lubarsky93-intuitionistic-l}]
  $L \vDash V = L$.
\end{theorem}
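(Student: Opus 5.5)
The plan is to relativise the construction of $L$ to $L$ and show that the relativised hierarchy $\tuple{L_\alpha^L : \alpha \in \Ord^L}$ reaches every $L_\alpha$. That is, we want to show that for every $x \in L$ there is an ordinal $\beta \in L$ with $L \vDash x \in L_\beta$. We proceed in three steps.

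\emph{Step 1: $L$ is a transitive model of $\mathrm{IKP}$.} We check that $L$ is transitive and that each $L_\alpha$ is transitive; both follow by set induction on $\alpha$, since $\dd(X) \subseteq \mathcal{P}(X)$ and $X \subseteq \dd(X)$ for transitive $X$. We then verify the axioms of $\mathrm{IKP}$ in $L$. Pairing and union are handled by finding a large enough stage via $\Delta_0$-collection. $\Delta_0$-comprehension holds because $\Delta_0$ formulas are absolute for transitive classes, so a $\Delta_0$-definable subset of $a \in L_\alpha$ lies in $\dd(L_\alpha)$. $\Delta_0$-collection in $L$ comes from collection in $V$ applied to the stages, followed by taking a union of ordinals. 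Set induction and infinity ($\omega \in L_{\omega+1}$) are routine.

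\emph{Step 2: absoluteness of the map $\alpha \mapsto L_\alpha$.} The operation $\dd$ is $\Delta_1^{\mathrm{IKP}}$-definable. It is obtained by $\Sigma$-recursion on $\omega$ (formulas coded as elements of $\omega$, satisfaction defined recursively), and $\Sigma$-recursion is available in $\mathrm{IKP}$. Hence $\alpha \mapsto L_\alpha$ is defined by $\Sigma$-recursion on ordinals, and its defining $\Sigma$ formula is absolute between transitive models of $\mathrm{IKP}$. Therefore, for every ordinal $\alpha \in L$, we have $(L_\alpha)^L = L_\alpha$.

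\emph{Step 3: ordinals.} This step is the main obstacle. Classically we would use $\alpha = L_\alpha \cap \Ord$, so that every $\alpha$ lies in $L$. Intuitionistically this equation fails, so we cannot simply conclude that $\Ord \subseteq L$. What we actually need is weaker: for each $\alpha \in \Ord$ there is an ordinal $\beta \in L$ with $L_\alpha \subseteq L_\beta$. We would try the following. For each $\alpha$, let $\rho(\alpha) \colonequals L_{\alpha} \cap \Ord$, or more carefully the set of ordinals in $L_{\alpha+1}$ obtained by set-inducting along $\alpha$. We then prove by set induction on $\alpha$ that $\rho(\alpha)$ is an ordinal lying in $L$, and that $L_\alpha \subseteq L_{\rho(\alpha)}$ (or at least $\bigcup_{\gamma\in\alpha}\dd(L_\gamma) \subseteq L_{\rho(\alpha)+1}$).

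The point is that the stages below $\alpha$ embed into stages indexed by ordinals of $L$: each $\gamma \in \alpha$ is replaced by the constructible ordinal $\rho(\gamma)$. To show $\rho(\alpha) \in L$, we use that the set $\{\rho(\gamma) : \gamma \in \alpha\}$ is $\Delta_0$-definable over some $L_\delta$ via the uniform definition of the hierarchy; its transitive closure under $\in$ is then an ordinal in $\dd(L_\delta)$. If $\rho(\alpha)$ turns out not to be transitive, we pass to the least transitive set of ordinals containing it, namely $\bigcup_\gamma (\rho(\gamma)+1)$, which is still definable.

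Given Step 3, any $x \in L$ lies in some $L_\alpha \subseteq L_{\rho(\alpha)} = (L_{\rho(\alpha)})^L$ by Step 2. Hence $L \vDash \exists \beta\, x \in L_\beta$, i.e. $L \vDash V = L$.
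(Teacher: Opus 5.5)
The paper gives no proof of this theorem; it quotes it from Lubarsky. So I judge your argument on its own terms. Steps 1 and 2 are standard and fine. Step 3, which you correctly call the crux, has a genuine gap: it is never actually carried out, and the ideas you sketch do not close it.

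\emph{The inclusion hypothesis is too weak.} A hypothesis of the form $L_\gamma \subseteq L_{\rho(\gamma)}$ does not propagate through $\dd$. A subset of $L_\gamma$ definable over $\tuple{L_\gamma; \in}$ is definable over $\tuple{L_{\rho(\gamma)}; \in}$ only if $L_\gamma$ itself is available there. So neither $L_\alpha \subseteq L_{\rho(\alpha)}$ nor $L_\alpha \subseteq L_{\rho(\alpha)+1}$ follows. You need \emph{equality} $L_\gamma = L_{\rho(\gamma)}$ as the hypothesis. To maintain it for $\rho(\alpha) = L_\alpha \cap \Ord$, you would also need the reverse inclusion, i.e.\ $\dd(L_\delta) \subseteq L_\alpha$ for every ordinal $\delta \in L_\alpha$. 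Classically this is immediate from $\delta < \alpha$, but intuitionistically $\delta \in L_\alpha$ gives no comparison between $\delta^+$ and $\alpha$, and you do not address it.

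\emph{The definability claim needs a parameter you do not have.} You claim $\{\rho(\gamma) : \gamma \in \alpha\}$, or $\bigcup_{\gamma \in \alpha}(\rho(\gamma)+1)$, is definable over some $L_\delta$. This uses $\alpha$ as a parameter, but $\alpha$ is not known to lie in $L$, which is precisely the difficulty. Also, $\gamma \mapsto L_\gamma$ is $\Sigma_1$, not $\Delta_0$. If you instead define $\rho$ by the recursion $\rho(\alpha) = \bigcup_{\gamma \in \alpha} \rho(\gamma)^+$, then $\rho$ is the identity, and you would be proving $\Ord \subseteq L$, the route you set out to avoid.

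\emph{How to close the gap.} Prove by set induction that for every ordinal $\alpha$ there is an ordinal $\beta \in L$ with $L_\alpha = L_\beta$. Build $\beta$ inside $L$ from the parameter $L_\alpha$ rather than from $\alpha$; note that $L_\alpha \in \dd(L_\alpha) \subseteq L$ always.
\begin{enumerate}[label=(\alph*)]
  \item Given the hypothesis below $\alpha$, $\Sigma$-collection yields $\zeta$ such that each $L_\gamma$ with $\gamma \in \alpha$ equals $L_\beta$ for some ordinal $\beta \in L_\zeta$.
  \item $L \vDash \mathrm{IKP}$ and $L$ computes $L_{\beta^+}$ correctly for $\beta \in L$ (your Steps 1--2). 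So $\Sigma$-replacement and $\Delta_0$-separation inside $L$ give $B = \{\beta \in L_\zeta \cap \Ord : L_{\beta^+} \subseteq L_\alpha\} \in L$, hence $\beta^* = \bigcup_{\beta \in B} \beta^+ \in L$.
  \item $L_{\beta^*} = \bigcup_{\beta \in B} L_{\beta^+} \subseteq L_\alpha$.
  \item Conversely, for $\gamma \in \alpha$ the witness $\beta$ from (a) satisfies $L_{\beta^+} = L_\gamma \cup \dd(L_\gamma) \subseteq L_\alpha$. So $\beta \in B$ and $\dd(L_\gamma) = \dd(L_\beta) \subseteq L_{\beta^*}$.
\end{enumerate}
Thus $L_\alpha = L_{\beta^*}$ with $\beta^* \in L$. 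With this lemma in place of Step 3, your final paragraph goes through.
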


As an immediate consequence, let $T$ be any classical or intuitionistic extension of $\mathrm{IKP}$, then as long as we can show $T \vdash L \vDash T$, we know that
\[T \equiv_{\mathrm{Con}} T + V = L.\]
i.e.\ the two theories are equiconsistent. This is the case for well-known classical theories $\mathrm{KP}$, $\mathrm{ZF}$ and $\mathrm{ZFC}$; as we know from Lubarsky \cite{lubarsky93-intuitionistic-l} and Matthews \& Rathjen \cite{matthews-rathjen24-constructible-universe}, this is also the case for intuitionistic theories $\mathrm{IKP}$ and $\mathrm{IZF}$.

However, there are also known theories $T$ that do not satisfy $T \vdash L \vDash T$ and violate the equiconsistency claim above. For example, Rathjen proved that
\begin{theorem}[{\cite[Theorem 4.6]{rathjen20-power-kp-choice}}]
  $\mathrm{KP}\mleft(\mathcal{P}\mright) + V = L$ is much stronger than $\mathrm{KP}\mleft(\mathcal{P}\mright)$,
\end{theorem}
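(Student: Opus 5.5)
The plan is to show that $\mathrm{KP}\mleft(\mathcal{P}\mright) + V = L$ proves the consistency of $\mathrm{KP}\mleft(\mathcal{P}\mright)$, and indeed of much stronger theories. By G\"odel's second incompleteness theorem this rules out equiconsistency, and it measures the gap. The key point is that under $V = L$ the power set operation, which $\mathrm{KP}\mleft(\mathcal{P}\mright)$ treats as a primitive $\Delta_0$-like operation, becomes tied to the $L$-hierarchy. For any $x \in L_\alpha$, every subset of $x$ lies in some $L_\beta$. Power set collection applied to $\mathcal{P}(x)$ then gives, for each $\alpha$, an ordinal $\beta$ with $\mathcal{P}(L_\alpha) \subseteq L_\beta$. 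So the classical theory $\mathrm{KP}\mleft(\mathcal{P}\mright) + V = L$ proves that $\alpha \mapsto$ ``least $\beta$ with $\mathcal{P}(L_\alpha)\subseteq L_\beta$'' is total, with $\mathcal{P}(L_\alpha) \in L$.

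The steps, in order, are as follows. First, using $V=L$ and the condensation lemma (provable in $\mathrm{KP}$), show that the constructibility order $<_L$ is $\Delta_1$ and gives a global well-order. Second, use this to prove that $\mathcal{P}(\omega)$, and more generally $\mathcal{P}(L_\alpha)$, is a set of the form $L_\beta \cap \mathcal{P}(L_\alpha)$, with $\beta$ obtainable by $\mathcal{P}$-bounded collection. This yields a cardinal structure: define $\kappa = \omega_1^L$ and successive cardinals $\aleph_n^L$. Third, show that $\mathrm{KP}\mleft(\mathcal{P}\mright)+V=L$ proves full separation for formulas bounded by power sets of the $L_\alpha$. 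Combined with $V=L$ this gives a model $L_\delta$, for suitable $\delta$ (e.g.\ the least $\delta$ whose $L_\delta$ is closed under the definable power set function on small levels), satisfying $\mathrm{KP}\mleft(\mathcal{P}\mright)$ relativised. More cleanly, I would argue that $\mathrm{KP}\mleft(\mathcal{P}\mright) + V = L$ proves the existence of $\mathcal{P}(\omega)$ together with the full second-order comprehension over it, so it interprets full $\mathrm{Z}_2$ (indeed $\mathrm{ZF}^-$-like fragments over $L_{\omega_1}$). Rathjen's ordinal analysis places $\mathrm{KP}\mleft(\mathcal{P}\mright)$ at the Bachmann--Howard-type strength of power Kripke--Platek, which is far below $\mathrm{Z}_2$. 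Hence the conclusion follows by comparing with the known proof-theoretic ordinal of $\mathrm{KP}\mleft(\mathcal{P}\mright)$, or alternatively by directly building a set model of $\mathrm{KP}\mleft(\mathcal{P}\mright)$ inside.

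The main obstacle is the third step. Separation in $\mathrm{KP}\mleft(\mathcal{P}\mright)$ is only $\Delta_0^{\mathcal{P}}$, so obtaining arbitrary definable subsets of $\omega$ requires showing that quantifiers over $L$ can be replaced by quantifiers over $\mathcal{P}$-bounded sets. I would first try Shoenfield-style absoluteness in the $L$-setting: under $V=L$, any $\Sigma^1_n$ statement about reals is equivalent to a statement bounded by $L_{\omega_n^L}$. Since $L_{\omega_n^L}$ is reachable by iterating $\mathcal{P}$ via the second step, the statement becomes $\Delta_0^{\mathcal{P}}$. From this, full second-order arithmetic comprehension follows.
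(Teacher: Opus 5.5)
The paper gives no proof of this statement; it is quoted from Rathjen. Judged on its own, your plan has a genuine gap: the final comparison is backwards. In $\mathrm{KP}(\mathcal{P})$ the symbol $\mathcal{P}$ may occur inside $\Delta_0$ formulas, so $\forall X \in \mathcal{P}(\omega)$ is a bounded quantifier. Hence $\Delta_0^{\mathcal{P}}$-separation already gives comprehension for every formula of second-order arithmetic, with no use of $V=L$ or of any absoluteness. Your ``main obstacle'' therefore does not exist.

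Worse, $\mathrm{KP}(\mathcal{P})$ alone builds $V_{\omega+\omega}$ by $\Sigma^{\mathcal{P}}$-recursion, together with its satisfaction relation, and verifies that it is a model of Zermelo set theory. So $\mathrm{KP}(\mathcal{P}) \vdash \mathrm{Con}(\mathrm{Z})$, and a fortiori $\mathrm{Con}(\mathrm{Z}_2)$. Interpreting $\mathrm{Z}_2$, or showing $L_{\omega_1} \models \mathrm{ZF}^-$ (which has the same strength as $\mathrm{Z}_2$), therefore separates nothing.

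Your claim that $\mathrm{KP}(\mathcal{P})$ is ``far below $\mathrm{Z}_2$'' misreads Rathjen's relativized ordinal analysis. There the Bachmann--Howard ordinal $\tau$ bounds the \emph{height} of what $\mathrm{KP}(\mathcal{P})$ (even with AC) can prove to exist: its $\Pi_2$ theorems in the language with $\mathcal{P}$ hold in $V_\tau$. It does not bound its second-order arithmetic strength.

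Your fallback fails as well. The least $\delta$ with $L_\delta$ closed under power sets is $\aleph_\omega$. In $L_{\aleph_\omega}$ the map $n \mapsto \aleph_n$ is a total $\Sigma^{\mathcal{P}}$-definable function on $\omega$ with no bound, so $\Delta_0^{\mathcal{P}}$-collection fails there.

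What $V=L$ really adds is height, and your first paragraph already contains the right observation. Apply collection to $\forall x \in \mathcal{P}(\omega)\, \exists \alpha\, (x \in L_\alpha)$ to get $\gamma$ with $\mathcal{P}(\omega) \subseteq L_\gamma$. Then combine three things: $\Delta_0^{\mathcal{P}}$-separation quantifying over $f \subseteq \omega \times \gamma$, induction along a given well-ordering of $\omega$, and a diagonal argument showing $\gamma$ is not countable. Together they show every well-ordering of $\omega$ is isomorphic to an ordinal, so $\omega_1$ exists.

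That $\Pi_2^{\mathcal{P}}$ sentence is false in $V_\tau$, which contains every real but only ordinals below the countable $\tau$. So $\mathrm{KP}(\mathcal{P})$ cannot prove it. To turn this into a gap in strength, you need to exploit the new height rather than second-order comprehension, which both theories share. One way is to build $V_\tau$, or a transitive model of $\mathrm{KP}(\mathcal{P})$, inside $\mathrm{KP}(\mathcal{P}) + V = L$ and run a formalized soundness argument to obtain $\mathrm{Con}(\mathrm{KP}(\mathcal{P}))$.
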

where $\mathrm{KP}\mleft(\mathcal{P}\mright)$ is the theory of Kripke--Platek set theory with the powerset operation $\mathcal{P}$ defined as a new primitive function symbol, thus allowed to appear in comprehension and collection schemes.

Important intuitionistic extensions of $\mathrm{IKP}$ include the constructive Zermelo--Fraenkel set theory $\mathrm{CZF}$, which adds the axioms of unrestricted strong collection and subset collection (see \cite{aczel-rathjen10-cst-book} for details), as well as its further extension $\mathrm{CZF}\mleft(\mathcal{P}\mright) = \mathrm{CZF} + \mathrm{Powerset}$. Recently, it is shown in \cite{matthews-rathjen24-constructible-universe} that
\begin{theorem}[{\cite[Theorem 7.12]{matthews-rathjen24-constructible-universe}}]
  $\mathrm{CZF}\mleft(\mathcal{P}\mright) \nvdash L \vDash \mathrm{Exp}$, where $\mathrm{Exp}$ is the axiom of exponentiation stating
  \[\forall x, y \ \exists z \ \forall f \left(f \in z \leftrightarrow \text{$f$ is a function with domain $x$ and codomain $y$}\right).\]
\end{theorem}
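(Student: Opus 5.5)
To prove $\mathrm{CZF}\mleft(\mathcal{P}\mright) \nvdash L \vDash \mathrm{Exp}$, I will build a model of $\mathrm{CZF}\mleft(\mathcal{P}\mright)$ in which the constructible universe, computed internally, fails exponentiation. Since the intuitionistic theory only needs to fail to prove the statement, a realisability model is the natural tool. I will work over a classical metatheory (say $\mathrm{ZF}$ plus enough large sets), and use a Kleene-style realisability model $V(\mathcal{A})$ over a partial combinatory algebra $\mathcal{A}$. I will take $\mathcal{A}$ to be Kleene's first algebra of indices of partial recursive functions, or a relativised variant. Rathjen's standard results say that such models validate $\mathrm{CZF}$, and validate Powerset when the model is built over a full classical universe. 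So the first step is to recall these facts and fix the model.

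The second step is to identify $L$ inside the model. Realisability collapses the ordinals in a controlled way: internal ordinals correspond to transitive, set-inductively "realised" objects. The defining clause $L_\alpha = \bigcup_{\beta\in\alpha}\dd(L_\beta)$ evaluated in the model should produce sets whose elements are represented by codes with \emph{computable} membership data. Concretely, I expect the interpretation of $L$ to be isomorphic, at the level of realisers, to something like the classical $L$ with realisers that are uniformly recursive in indices. For example, $L_\omega$ consists of hereditarily finite sets with decidable equality.

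The third step is to show that in the interpreted $L$ there is no set of all functions $\omega\to\omega$, or even $\omega\to 2$. Suppose $z\in L$ were realised to be ${}^\omega 2$. Then $z\in L_\alpha$ for some internal $\alpha$, and every element of $z$ is a definable set over some $L_\beta$. The realisers for "$f$ is a function from $\omega$ to $2$" in the full realisability universe include every total recursive $e$, and these all yield functions. Their membership in $L$ is witnessed by constructions that are arithmetic or hyperarithmetic-in-the-realiser uniformly. However, the realiser for "$\forall f\,(f\text{ total function}\to f\in z)$" would have to uniformly compute, from a realiser of totality (an index $e$), a realiser of membership in $z$. That membership realiser pins down, recursively, an element of a fixed set of codes. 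A diagonalisation over indices, in the style of the recursion-theoretic argument the abstract hints at, should then give a contradiction. Equivalently, I would show that the functions $\omega\to2$ lying in the internal $L$ form a countable family that is not uniformly enumerable. Yet any set in $L$ is internally subcountable, so $z$ would supply a realised surjection from a subset of $\omega$, which a diagonal function defeats.

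The main obstacle is step two: computing exactly what $L$ is in the realisability model, because $\dd(X)$ is evaluated intuitionistically, with realisers for formulas over $X$. I would first try to prove by induction on $\alpha$ that each internal $L_\alpha$ is realisably in bijection with a subset of $\omega$, via a uniformly computable enumeration. This would give internal subcountability of all of $L$, and the diagonal argument in step three then finishes the proof.
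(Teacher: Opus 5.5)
\textbf{There is a genuine gap: the model you choose actually realises $L \vDash \mathrm{Exp}$, and your diagonal argument would refute results that hold.} The paper does not prove this statement; it only quotes it from Matthews--Rathjen. So I am judging your plan on its own terms.

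\textbf{The model.} To get Powerset you build the realisability model over a full classical universe, such as McCarty's or Rathjen's $V(\mathcal{A})$ over $\mathrm{ZF}$. Such a model realises all of $\mathrm{IZF}$, including full separation, and then $L\vDash\mathrm{Exp}$ is realised. To see this, note that over $\mathrm{CZF}$, $L\vDash\mathrm{Exp}$ is equivalent to: for all $x,y\in L$, the class $S=\{f\in{}^xy : f\in L\}$ is a set. Suppose $S$ is a set.
\begin{itemize}
\item Collection gives a set of ordinals $\alpha$ such that every $f\in S$ lies in some $L_\alpha$.
\item Let $\beta$ be the union of these ordinals together with a stage containing $x,y$.
\item Since $\alpha\subseteq\beta$ implies $L_\alpha\subseteq L_\beta$, we get $S=\{f\in L_\beta : f\colon x\to y\}$.
\item This set is $\Delta_0$-definable over the transitive set $L_\beta$, so $S\in\dd(L_\beta)\subseteq L$.
\end{itemize}
With full separation, $S$ is trivially a set. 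This agrees with the fact, recalled in the introduction, that $\mathrm{IZF}\vdash L\vDash\mathrm{IZF}$. So the real content of the theorem is building a model of $\mathrm{CZF}$ plus Powerset in which this particular instance of $\Sigma_1$-separation fails. Your proposal does not address this.

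\textbf{Step three.} Independently of the model, this step is invalid. Take a surjection $g$ from some $u\subseteq\omega$ onto ${}^\omega 2$. The diagonal function $h(n)=1-g(n)(n)$ is only defined on $u$. Extending it to a total function on $\omega$ requires a case split on $n\in u$, i.e.\ decidability of $u$. In Kleene realisability, ${}^\omega 2$ is naturally indexed by exactly such an undecidable $u$: the indices of total $0$--$1$-valued recursive functions. The uniform passage from an index to a membership realiser that you hope to refute is precisely what happens for ${}^\omega 2$ in $V$.

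So the argument proves too much. It would refute $\mathrm{Exp}$ in $V$, which is a theorem of $\mathrm{CZF}$. It is also contradicted outright by this paper's own Kleene-style model, which realises $\mathrm{CZF}+\text{Subcountability}+V=L$. There $L=V$ satisfies $\mathrm{Exp}$ while every set in $L$ is subcountable, and step three uses nothing that fails in that model.

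\textbf{Step two.} Your picture of $L$ as a thin ``classical $L$ with recursive realisers'' is also wrong. Already in $\mathrm{CZF}$, $L_\alpha=\alpha$ for every $\alpha\subseteq 1$, so $\mathcal{P}(1)\subseteq L$. By the paper's lemma, every ordinal $\alpha\subseteq\omega$ lies in $L$.

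\textbf{What is missing.} You need an $L$-specific obstruction: a reason why the stages at which functions $x\to y$ enter $L$ cannot be collected into a set. That obstruction has to be realised in a model of $\mathrm{CZF}$ plus Powerset in which full separation fails.
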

$\mathrm{Exp}$ is a weakening of the powerset axiom and a theorem of $\mathrm{CZF}$. Consequently, for any fragment $T \subseteq \mathrm{CZF}\mleft(\mathcal{P}\mright)$ that proves $\mathrm{Exp}$, we do not have $T \vdash L \vDash T$. This leaves open the question whether we still have $T \equiv_{\mathrm{Con}} T + V = L$ for such theories, most notably when $T = \mathrm{IKP}\mleft(\mathcal{P}\mright)$, $\mathrm{CZF}$ and $\mathrm{CZF}\mleft(\mathcal{P}\mright)$.

In this paper, we shall resolve the case for $\mathrm{CZF}$ and prove that, still,
\[\mathrm{CZF} \equiv_{\mathrm{Con}} \mathrm{CZF} + V = L.\]
To do this, we will use a realisability model of $\mathrm{CZF}$ together with the axiom of subcountability,
\[\forall x \ \exists u, f \left(u \subseteq \omega \land \text{$f$ is a surjection from $u$ onto $x$}\right).\]
We then tweak this realisability model to include an $\omega$-sequence of \emph{strongly incomparable ordinals} in\footnote{In this paper, we treat the theory $\mathrm{CZF}$ which does not admit the axiom of powerset. Thus, throughout this paper, we shall use the notation $\mathcal{P}\mleft(x\mright)$ to denote a $\Delta_0$-definable class consisting of all subsets of $x$, which may not itself be a set.} $\mathcal{P}\mleft(\omega\mright)$ inside $L$, that is, a distinguished object $f_0 : \omega \rightarrow \mathcal{P}\mleft(\omega\mright)$ in $L$ satisfying
\[\forall i, j \in \omega \left(f_0\mleft(i\mright) \subseteq f_0\mleft(j\mright) \rightarrow i = j\right).\]
Finally, we will show that this suffices for implementing a mechanism to encode any subcountable set using an ordinal in $L$ and thus conclude that the model satisfies $\mathrm{CZF} + V = L$.

\section{The realisability model}
\label{sec:real-model}

The model we construct in this paper will be a variant of the the procedures in Rathjen \cite{rathjen14-czf-lpo}, that is, we make use of the well-known equiconsistency between $\mathrm{CZF}$ and the classical second-order arithmetic theory $\mathrm{BI}$, which consists of $\mathrm{ACA}_0$ (i.e.\ Peano arithmetic with second-order comprehension for arithmetic formulae and $\Sigma^0_1$-induction) and the axiom scheme of bar induction,
\[\forall X \ \mathrm{TI}\mleft(\prec, x \in X\mright) \rightarrow \mathrm{TI}\mleft(\prec, \varphi\mleft(x\mright)\mright)\]
for any $\varphi\mleft(x\mright)$, where $\prec$ is any formula defining a binary relation on first-order objects and $\mathrm{TI}\mleft(\prec, \varphi\mleft(x\mright)\mright) = \forall x \left(\forall y \prec x \ \varphi\mleft(y/x\mright) \rightarrow \varphi\mleft(x\mright)\right) \rightarrow \forall x \ \varphi\mleft(x\mright)$. We will formulate a definable type structure in $\mathrm{BI}$ and realise $\mathrm{CZF} + \text{Subcountability}$ in it.

To begin, we equip $\mathbb{N}$ with a partial combinatory algebra (abbreviated henceforth as pca) using usual Turing computability, i.e.\ we write\footnote{In this paper, computable function application will be used pervasively, so we reserve the juxtaposition notation for convenience. To avoid ambiguity, multiplication on $\mathbb{N}$ will instead always be written with a dot, e.g.\ $p \cdot q$. Additionally, we will not explicitly use any natural number greater than $9$ in this paper, thus the notation $f11$ should be read as $\mleft(f1\mright)1$, instead of $f$ being applied to the number $11$.} $pq = r$ to mean that the Turing machine with index $p$ terminates on input $q$ and outputs the result $r$. We use $pq {\downarrow}$ to denote $\exists r \ pq = r$.

We define the following basic combinators in the pca:
\begin{itemize}
  \item $\mathbf{k}ab = a$;
  \item $\mathbf{s}abc = ac\mleft(bc\mright)$;
  \item $\mathbf{s}_N a = a + 1$;
  \item $\mathbf{p}_N a = \mathrm{max}\mleft\{a - 1, 0\mright\}$;
  \item $\mathbf{d}abc_1c_2 = \left\{\begin{aligned}
             & a &  & \text{if $c_1 = c_2$}, \\
             & b &  & \text{otherwise};
          \end{aligned}\right.$
  \item $\mathbf{p}ab = \left\{\begin{aligned}
             & \mathrm{max}\mleft\{a, b\mright\} \cdot \left(\mathrm{max}\mleft\{a, b\mright\} + 1\right) - a + b &  & \text{if $\mathrm{max}\mleft\{a, b\mright\}$ is even}, \\
             & \mathrm{max}\mleft\{a, b\mright\} \cdot \left(\mathrm{max}\mleft\{a, b\mright\} + 1\right) + a - b &  & \text{otherwise};
          \end{aligned}\right.$
  \item $\mathbf{p}_0\mleft(\mathbf{p}ab\mright) = a$;
  \item $\mathbf{p}_1\mleft(\mathbf{p}ab\mright) = b$.
\end{itemize}
Here we need a specific definition for the pairing function $\mathbf{p}$, which will become relevant for a recursion-theoretic argument in \autoref{sec:dist-ord} below. For now, it suffices to check that

\begin{lemma}[$\mathrm{PA}$]
  For any $n \in \mathbb{N}$, $n > 0$, the map $\tuple{a, b} \mapsto \mathbf{p}ab$ is a bijection between $\left\{0, \ldots, n - 1\right\}^2$ and $\left\{0, \ldots, n^2 - 1\right\}$.
\end{lemma}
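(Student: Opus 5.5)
We prove the statement by induction on $n$, organised around the ``shells''
\[S_m = \left\{\tuple{a, b} \in \mathbb{N}^2 : \mathrm{max}\mleft\{a, b\mright\} = m\right\}.\]
Since $\left\{0, \ldots, n - 1\right\}^2$ is the disjoint union of $S_0, \ldots, S_{n-1}$, and $\left\{0, \ldots, n^2 - 1\right\}$ is the disjoint union of the intervals
\[I_m = \left\{m^2, \ldots, (m+1)^2 - 1\right\}, \qquad m < n,\]
it suffices to show that, for each $m$, the map $\tuple{a, b} \mapsto \mathbf{p}ab$ restricts to a bijection from $S_m$ onto $I_m$. Both sets have $2m + 1$ elements. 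The claim for $n$ then follows from the claim for $n - 1$ together with the single shell $S_{n-1}$, by a routine induction in $\mathrm{PA}$; the case $n = 1$ is just $\mathbf{p}00 = 0$.

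Fix $m$ and first take $m$ even, so that $\mathbf{p}ab = m(m+1) - a + b$. Split $S_m$ into two pieces. The pairs $\tuple{m, b}$ with $b \le m$ give the values $m^2 + b$, ranging over $m^2, \ldots, m^2 + m$. The pairs $\tuple{a, m}$ with $a < m$ give the values $m^2 + 2m - a$, ranging over $m^2 + m + 1, \ldots, m^2 + 2m$. Each piece is injective, being affine in a single variable, and the two ranges are disjoint and together fill $I_m$. Hence the restriction is a bijection $S_m \to I_m$.

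For $m$ odd, $\mathbf{p}ab = m(m+1) + a - b$, and the same computation applies with the roles of $a$ and $b$ swapped. The pairs $\tuple{a, m}$ with $a \le m$ give $m^2 + a$, and the pairs $\tuple{m, b}$ with $b < m$ give $m^2 + 2m - b$.

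Everything here is elementary arithmetic, formalised in $\mathrm{PA}$ by $\Delta_0$-induction. The only step needing care is the bookkeeping at the corner $\tuple{m, m}$, which must be counted in exactly one piece so that the two ranges are disjoint and have no gap. We also need $I_m \subseteq \mathbb{N}$, which holds because the truncated subtraction never actually truncates: when $m$ is even, $m(m+1) - a + b \ge m^2 \ge 0$. As a byproduct, the argument also justifies the existence of the projections $\mathbf{p}_0$ and $\mathbf{p}_1$ stated in the definition of the pca.
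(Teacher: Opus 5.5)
Your proof is correct and takes essentially the paper's route: the paper also rests on $\mathrm{max}\{a,b\}^2 \le \mathbf{p}ab < (\mathrm{max}\{a,b\}+1)^2$, with each shell of $2m+1$ pairs of maximum $m$ mapped bijectively onto $\{m^2, \ldots, (m+1)^2-1\}$. Your explicit computation of the values on the two halves of each shell, for $m$ even and odd, supplies the arithmetic the paper leaves implicit. Your description of the shell is also the correct one, whereas the paper's displayed set of tuples writes $\langle 0, i\rangle, \langle i, 0\rangle$ where $\langle n-1, i\rangle, \langle i, n-1\rangle$ is intended.
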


\begin{proof}
  Just observe that for any $a, b \in \mathbb{N}$,
  \[\mathrm{max}\mleft\{a, b\mright\}^2 \leq \mathbf{p}ab < \left(\mathrm{max}\mleft\{a, b\mright\} + 1\right)^2,\]
  so for each $n \in \mathbb{N}$, $n > 0$, $\mathbf{p}$ bijects precisely the set of $\left(2 \cdot n - 1\right)$ tuples
  \[\left\{\tuple{0, i} : 0 \leq i < n\right\} \cup \left\{\tuple{i, 0} : 0 \leq i < n - 1\right\}\]
  onto $\left\{\left(n - 1\right)^2, \ldots, n^2 - 1\right\}$.
\end{proof}

It follows that $\mathbf{p}$ defines a recursive bijection $\mathbb{N}^2 \rightarrow \mathbb{N}$, which automatically ensures that the recursive inverses $\mathbf{p}_0, \mathbf{p}_1$ exist.

We now define an instance of Aczel's Martin-L\"of type theory with a single $W$-type \cite{aczel78-type-theoretic-cst} (a theory now known as $\mathrm{ML}_1\mathrm{V}$), together with one arbitrary distinguished type, in $\mathrm{BI}$ and a realisability interpretation on top. This is analogous to the constructions in \cite[section 5--6]{rathjen14-czf-lpo} and is possible thanks to the availability of inductive definitions in $\mathrm{BI}$ (c.f.\ \cite[Lemma 3.2]{rathjen14-czf-lpo}). We fix an arbitrary set $X \subseteq \mathbb{N}$ and denote type names $\overline{n}, \overline{\mathbb{N}}, \overline{X}$ and type constructors $\sigma, \pi$ as elements of the pca:
\begin{alignat*}{2}
  \overline{n}          & = \mathbf{p}0n \quad \mathrlap{\text{for each $n \in \mathbb{N}$},}                                                                         \\
  \overline{\mathbb{N}} & = \mathbf{p}10,                                                     & \qquad \sigma & = \lambda nm. \mathbf{p}2\mleft(\mathbf{p}nm\mright), \\
  \overline{X}          & = \mathbf{p}11,                                                     & \pi           & = \lambda nm. \mathbf{p}3\mleft(\mathbf{p}nm\mright).
\end{alignat*}
We then inductively define $\Pi^1_1$-classes $\mathbf{U}, \mathbf{V} \subseteq \mathbb{N}$ and relations ${\din}, {\dnin} \subseteq \mathbb{N} \times \mathbf{U}$ simultaneously through the following rules:

\begin{enumerate}
  \item[$0$.] For any $n \in \mathbb{N}$, $\overline{n} \in \mathbf{U}$.

  \item[$0'$.] For any $k, n \in \mathbb{N}$, if $k < n$ then $k \din \overline{n}$; if $k \geq n$ then $k \dnin \overline{n}$.

  \item[$1$.] $\overline{\mathbb{N}}, \overline{X} \in \mathbf{U}$.

  \item[$1'$.] For any $n \in \mathbb{N}$, $n \din \overline{\mathbb{N}}$; if $n \in X$ then $n \din \overline{X}$; if $n \not\in X$ then $n \dnin \overline{X}$.

  \item[$2,3$.] For any $n \in \mathbf{U}$, $e \in \mathbb{N}$, if $\forall k \left(k \dnin n \lor \left(ek {\downarrow} \land ek \in \mathbf{U}\right)\right)$ then $\sigma ne, \pi ne \in \mathbf{U}$.

  \item[$2'$.] Assume $\sigma ne \in \mathbf{U}$, for any $k, u \in \mathbb{N}$, if $k \din n$ and $u \din ek$ then $\mathbf{p}ku \din \sigma ne$; if $k \dnin n$ or $u \dnin ek$ then $\mathbf{p}ku \dnin \sigma ne$.

  \item[$3'$.] Assume $\pi ne \in \mathbf{U}$, for $d \in \mathbb{N}$, if $\forall k \left(k \dnin n \lor \left(dk {\downarrow} \land dk \din ek\right)\right)$ then $d \din \pi ne$; if $\exists k \left(k \din n \land \left(dk {\downarrow} \rightarrow dk \dnin ek\right)\right)$ then $d \dnin \pi ne$.

  \item[$4$.] For any $n \in \mathbf{U}$, $e \in \mathbb{N}$, if $\forall k \left(k \dnin n \lor \left(ek {\downarrow} \land ek \in \mathbf{V}\right)\right)$ then $\mathbf{p}ne \in \mathbf{V}$.
\end{enumerate}

Clearly, the rules stipulate that every $\overline{n} \in \mathbf{U}$ denotes a finite type with $n$ elements; $\overline{N} \in \mathbf{U}$ denotes the type of natural numbers. The type constructors $\sigma$ and $\pi$ builds dependent sums and products respectively. Notably, since the elements of $\pi$-types are required specifically to be computable functions, by manipulating the computational complexity of the set $X$ we can alter the content of derived $\pi$-types. For example, let $h : \mathbb{N} \rightarrow \mathbb{N}$ be any function and $X = \left\{t \in \mathbb{N} : t \sqsubseteq h\right\}$ be the set of natural number codes for the finite initial segments of $h$ such that $\mathbf{l}_s : X \rightarrow \mathbb{N}$ is a recursive function mapping each initial segment to its length (c.f.\ the beginning of \autoref{sec:dist-ord} for a more detailed set-up), then the type
\[\pi \overline{\mathbb{N}} \mleft(\lambda n. \sigma \overline{X}\mleft(\lambda t. \mathbf{d} \overline{1} \overline{0} n \mleft(\mathbf{l}_s t\mright)\mright)\mright)\]
represents precisely the computable functions that enumerate initial segments of $h$, and thus is non-empty if and only if $h$ is computable. This shall be taken advantage of in \autoref{sec:dist-ord} below.

For now, by the recursion theorem for Turing computations, we can define a binary function $\alpha, \beta \mapsto \alpha \deq \beta$ using a term in the pca to satisfy the mutual recursive definitions
\begin{align*}
  \alpha \deq \beta \quad  & = \quad \sigma \mleft(\alpha \dseq \beta\mright) \mleft(\lambda x. \beta \dseq \alpha\mright),                                                                                           \\
  \alpha \dseq \beta \quad & = \quad \pi \mleft(\mathbf{p}_0 \alpha\mright) \mleft(\lambda x. \sigma \mleft(\mathbf{p}_0 \beta\mright) \mleft(\lambda y. \mathbf{p}_1 \alpha x = \mathbf{p}_1 \beta y\mright)\mright)
\end{align*}
for any $\alpha, \beta \in \mathbf{V}$. The totality of this function on $\mathbf{V}^2$ follows from induction on $\mathbf{V}$.

We now let first-order variables in a language of set theory range over elements in $\mathbf{V}$ and define the following realisability conditions:
\begin{align*}
  e \Vdash \alpha = \beta \quad                                 & \coloneqq \quad e \din \left(\alpha \deq \beta\right),                                                                                                       \\
  e \Vdash \alpha \in \beta \quad                               & \coloneqq \quad \mathbf{p}_0 e \din \mathbf{p}_0 \beta \land \mathbf{p}_1 e \Vdash \alpha = \mathbf{p}_1 \beta \mleft(\mathbf{p}_0 e\mright),                \\
  e \Vdash \neg \varphi \quad                                   & \coloneqq \quad \forall d \ \neg d \Vdash \varphi,                                                                                                           \\
  e \Vdash \varphi \land \psi \quad                             & \coloneqq \quad \mathbf{p}_0 e \Vdash \varphi \land \mathbf{p}_1 e \Vdash \psi,                                                                              \\
  e \Vdash \varphi \lor \psi \quad                              & \coloneqq \quad \left(\mathbf{p}_0 e = 0 \land \mathbf{p}_1 e \Vdash \varphi\right) \lor \left(\mathbf{p}_0 e = 1 \land \mathbf{p}_1 e \Vdash \psi\right),   \\
  e \Vdash \varphi \rightarrow \psi \quad                       & \coloneqq \quad \forall d \ \left(d \Vdash \varphi \rightarrow ed {\downarrow} \land ed \Vdash \psi\right),                                                  \\
  e \Vdash \forall x \in \alpha \ \varphi\mleft(x\mright) \quad & \coloneqq \quad \forall i \din \mathbf{p}_0 \alpha \left(ei {\downarrow} \land ei \Vdash \varphi\mleft(\mathbf{p}_1 \alpha i\mright)\right),                 \\
  e \Vdash \exists x \in \alpha \ \varphi\mleft(x\mright) \quad & \coloneqq \quad \mathbf{p}_0 e \din \mathbf{p}_0 \alpha \land \mathbf{p}_1 e \Vdash \varphi\mleft(\mathbf{p}_1 \alpha \mleft(\mathbf{p}_0 e\mright)\mright), \\
  e \Vdash \forall x \ \varphi\mleft(x\mright) \quad            & \coloneqq \quad \forall \alpha \in \mathbf{V} \left(e\alpha {\downarrow} \land e\alpha \Vdash \varphi\mleft(\alpha\mright)\right),                           \\
  e \Vdash \exists x \ \varphi\mleft(x\mright) \quad            & \coloneqq \quad \mathbf{p}_0 e \in \mathbf{V} \land \mathbf{p}_1 e \Vdash \varphi\mleft(\mathbf{p}_0 e\mright).
\end{align*}

It is routine to verify in $\mathrm{BI}$ that the axioms of $\mathrm{CZF}$ are all realised in this interpretation, for example following the detailed working in \cite{aczel78-type-theoretic-cst} or \cite{rathjen06-formulae-as-classes-interpretation}. We will mention below a few common constructions in the literature that will become convenient for our later use:

Firstly, by the recursion theorem, we can easily construct some $\iota, \delta \in \mathbb{N}$ such that
\[\delta n = \mathbf{p} n \iota, \qquad \iota = \mathbf{p} \delta \delta.\]
By induction on $\mathbf{V}$, $\iota \Vdash \alpha = \alpha$ for any $\alpha \in \mathbf{V}$.

Secondly, the canonical unordered pair is given by $\left\{\alpha, \beta\right\}^V = \mathbf{p}\overline{2}\mleft(\lambda x. \mathbf{d}x0\alpha\beta\mright)$. The canonical ordered pair can then simply be $\tuple{\alpha, \beta}^V = \left\{\left\{\alpha, \alpha\right\}^V, \left\{\alpha, \beta\right\}^V\right\}^V$. These representations are all computable in the pca.

Also, for $n \in \mathbb{N}$, we can recursively construct $n^V = \mathbf{p}\overline{n}\mleft(\lambda x. x^V\mright)$, then we also have $\omega^V = \mathbf{p}\overline{\mathbb{N}}\mleft(\lambda x. x^V\mright)$. The object $\omega^V \in \mathbf{V}$ will be what the realisability model thinks is the ordinal $\omega$.

We additionally verify that the axiom of subcountability is realised:

\begin{proposition}[Subcountability]
  There exists $e \in \mathbb{N}$ such that
  \[e \Vdash \forall x \ \exists u, f \left(u \subseteq \omega \land \text{$f$ is a surjection from $u$ onto $x$}\right).\]
\end{proposition}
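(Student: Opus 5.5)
Given $\alpha = \mathbf{p}n e' \in \mathbf{V}$, we take as witness for $u$ the set of natural numbers $k$ with $k \din n$, represented as a set in $\mathbf{V}$, and as witness for $f$ the set of pairs $\tuple{k^V, e'k}^V$ for $k \din n$. Concretely, we put
\[u_\alpha = \mathbf{p} n \mleft(\lambda k. k^V\mright), \qquad f_\alpha = \mathbf{p} n \mleft(\lambda k. \tuple{k^V, \mathbf{p}_1 \alpha k}^V\mright).\]
Both lie in $\mathbf{V}$ by rule $4$: for each $k \din n$ we have $k^V \in \mathbf{V}$, and $\mathbf{p}_1\alpha k \in \mathbf{V}$ because $\alpha \in \mathbf{V}$. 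Both are computable uniformly in $\alpha$ by a pca term. The realiser $e$ will therefore map $\alpha$ to the pair consisting of $u_\alpha$ and the pair consisting of $f_\alpha$ and a realiser for the conjunction "$u_\alpha \subseteq \omega \land f_\alpha$ is a surjection from $u_\alpha$ onto $\alpha$". We still have to build that last realiser uniformly.

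We treat the conjuncts in turn, unfolding the bounded formulae into the $\Vdash$ clauses.

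\emph{$u_\alpha \subseteq \omega$.} This means $\forall x \in u_\alpha \ \exists y \in \omega \ x = y$. It is realised by $\lambda k. \mathbf{p} k \iota$: the element indexed by $k$ in $u_\alpha$ is $k^V$, which is literally $\mathbf{p}_1 \omega^V k$, and $k \din \overline{\mathbb{N}}$ holds by rule $1'$.

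\emph{$f_\alpha \subseteq u_\alpha \times \alpha$.} Every element is an ordered pair $\tuple{k^V, \mathbf{p}_1\alpha k}^V$. We realise it with witnesses $k$ in $u_\alpha$ and $k$ in $\alpha$, together with $\iota$-based realisers of the equality between the ordered pair and $\tuple{x,y}$, uniformly in $k$.

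\emph{Totality and surjectivity.} Totality, $\forall x \in u_\alpha \exists y \in \alpha\ \tuple{x,y} \in f_\alpha$, is realised by sending $k$ to $k$ twice plus $\iota$. Surjectivity, $\forall y \in \alpha \exists x \in u_\alpha\ \tuple{x,y}\in f_\alpha$, is realised the same way.

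\emph{Functionality.} We must realise: if $\tuple{x,y},\tuple{x,y'} \in f_\alpha$ then $y = y'$. A realiser of $\tuple{k^V,\cdot} \in f_\alpha$ yields an index $k$ together with a realiser of equality of ordered pairs. From realisers of $k^V = j^V$ we can compute (using the realisability of the Kuratowski pair property, a standard lemma from the verification of $\mathrm{CZF}$) realisers of $x = k^V$, $y = \mathbf{p}_1\alpha k$, and similarly for $j$. We then need $\mathbf{p}_1\alpha k = \mathbf{p}_1\alpha j$ whenever $k^V = j^V$ is realised.

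The main obstacle is exactly this step, since the map $k \mapsto \mathbf{p}_1\alpha k$ need not respect extensional equality in $\mathbf{V}$. It is resolved by observing that $k^V = j^V$ is realised only if $k = j$. One shows this by induction on $\min(k,j)$: a realiser of $k^V \dseq j^V$ sends each $i<k$ to some $i'<j$ with $i^V = i'^V$ realised, and by induction $i = i'$, so $k \le j$; symmetrically $j \le k$. Since equality is realised only for equal indices, the required equality of images reduces to reflexivity, realised by $\iota$. Transitivity and symmetry realisers for $=$ (standard in the model) are used to chain the equalities.

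Gluing all the pieces with $\lambda$-abstraction and $\mathbf{p}$ gives the required uniform $e$, and all the needed facts are provable in $\mathrm{BI}$.
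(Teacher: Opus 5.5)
Your proposal is correct and follows the paper's proof, using exactly the same witnesses $u = \mathbf{p}(\mathbf{p}_0\alpha)(\lambda x.\, x^V)$ and $f = \mathbf{p}(\mathbf{p}_0\alpha)(\lambda x.\, \tuple{x^V, \mathbf{p}_1\alpha x}^V)$. The paper only says the remaining conjuncts are ``easy to check''; you supply those details, in particular the needed observation that $k^V = j^V$ is realised only when $k = j$, which is what makes $f$ functional.
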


\begin{proof}
  For any $\alpha \in \mathbf{V}$ that is assigned to $x$, we can compute $u$ as
  \[\mathbf{p} \mleft(\mathbf{p}_0 \alpha\mright) \mleft(\lambda x. x^V\mright),\]
  and $f$ as
  \[\mathbf{p} \mleft(\mathbf{p}_0 \alpha\mright) \mleft(\lambda x. \tuple{x^V, \mathbf{p}_1 \alpha x}^V\mright).\]
  It is easy to check that ``$u \subseteq \omega \land \text{$f$ is a surjection from $u$ onto $x$}$'' is realised by  an appropriate object.
\end{proof}

\section{The distinguished objects}
\label{sec:dist-ord}

We will now select a specific set $X$ through a recursion-theoretic construction in $\mathrm{BI}$ and use $\overline{X}$ to build some $\alpha_0 \in \mathbf{V}$ that the realisability model thinks is an intuitionistic ordinal with very specific behaviours. We fix an arbitrary, recursive way to code any finite $\mathbb{N}$-sequence as a single natural number, such that the following notions are definable as applicative terms in the pca:
\begin{itemize}
  \item $\mathbf{l}_s t$ is the length of the sequence,
  \item $\mathbf{t}_s tn$ is the truncated sequence of first $n$ components in $t$ for any $0 \leq n \leq \mathbf{l}_s t$,
\end{itemize}
whenever $t$ is a valid code for some $\mathbb{N}$-sequence.

We consider a priority argument where for each tuple of $i, j, f \in \mathbb{N}$ where $i \neq j$, we have the following requirement on a finite sequence $t$: there exists some $n \in \mathbb{N}$ such that $i, j < n$, $\mathbf{p}in, \mathbf{p}jn \leq \mathbf{l}_s t$, and
\[f\mleft(\mathbf{t}_s t \mleft(\mathbf{p}in\mright)\mright) {\downarrow} \rightarrow f\mleft(\mathbf{t}_s t \mleft(\mathbf{p}in\mright)\mright) \neq \mathbf{t}_s t \mleft(\mathbf{p}jn\mright),\]
i.e.\ that the Turing machine with index $f$ does not correctly compute the first $\mathbf{p}jn$ components of the sequence $t$ given its first $\mathbf{p}in$ components, for some $n \in \mathbb{N}$ where $t$ has enough components. We want to show that

\begin{proposition}
  \label{prop:non-comp-path}
  There is some $h : \mathbb{N} \rightarrow \mathbb{N}$ such that for any $i, j, f \in \mathbb{N}$ where $i \neq j$, the corresponding requirement is satisfied by some finite initial segment of $h$.
\end{proposition}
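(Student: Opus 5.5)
We build $h$ as the limit of a finite-extension construction, $t_0 \sqsubseteq t_1 \sqsubseteq t_2 \sqsubseteq \cdots$, and set $h = \bigcup_s t_s$. No injury occurs, so this is really a Kleene--Post style argument. Enumerate all triples $\tuple{i, j, f}$ with $i \neq j$ as $r_0, r_1, \ldots$ via a recursive enumeration, and start with $t_0$ the empty sequence. At stage $s$ we handle $r_s = \tuple{i, j, f}$. Since extending a sequence preserves satisfaction of a requirement (the truncations $\mathbf{t}_s t\mleft(\mathbf{p}in\mright)$ and $\mathbf{t}_s t\mleft(\mathbf{p}jn\mright)$ only depend on the initial segment), it suffices to find a finite extension $t_{s+1} \sqsupseteq t_s$ satisfying $r_s$. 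Every requirement is then met by some initial segment of $h$. The construction needs only arithmetic comprehension relative to the halting set, since each stage asks a single halting question. It can therefore be carried out in $\mathrm{ACA}_0 \subseteq \mathrm{BI}$: define the sequence $\tuple{t_s}$ by a $\Delta^0_2$-recursion, or simply use $0'$ as an oracle.

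The key step is choosing $n$ at stage $s$. Let $m = \mathbf{l}_s t_s$. Pick $n$ with $i, j < n$ and $n^2 > m$ large enough that both $\mathbf{p}in$ and $\mathbf{p}jn$ are at least $m$. By the lemma on $\mathbf{p}$, the pairs $\tuple{i, n}$ and $\tuple{j, n}$ have max equal to $n$ (because $i, j < n$), so $\mathbf{p}in, \mathbf{p}jn \geq n^2 > m$. These values are distinct since $i \neq j$ and $\mathbf{p}$ is injective.

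The argument now splits on which index is smaller.

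\emph{Case $\mathbf{p}in < \mathbf{p}jn$.} Extend $t_s$ arbitrarily, say by zeros, to length $\mathbf{p}in$, obtaining $u$. Ask whether $fu{\downarrow}$. If it diverges, the requirement holds vacuously, and we extend to length $\mathbf{p}jn$. If it converges, we extend $u$ to a sequence $v$ of length $\mathbf{p}jn$ with $v \neq fu$. This is possible because $\mathbf{p}jn > \mathbf{p}in \geq m$ leaves at least one free coordinate beyond $u$, and there are at least two distinct candidate extensions; pick one not equal to $fu$. Then $\mathbf{t}_s v\mleft(\mathbf{p}jn\mright) = v \neq fu = f\mleft(\mathbf{t}_s v\mleft(\mathbf{p}in\mright)\mright)$.

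\emph{Case $\mathbf{p}jn < \mathbf{p}in$.} Here $f$ is given the longer segment and must reproduce a prefix of it, which looks harder to diagonalise against. But $\mathbf{t}_s t\mleft(\mathbf{p}jn\mright)$ is then determined by $t$ itself. We extend to length $\mathbf{p}in$ so that the output $f(\cdot)$ differs from the prefix of length $\mathbf{p}jn$. The first attempt is to fix the prefix $w$ of length $\mathbf{p}jn$ and then vary the coordinates between $\mathbf{p}jn$ and $\mathbf{p}in$; if $f$ on every such extension either diverges or differs from $w$, we are done. The obstacle is that $f$ might output $w$ on every extension. To handle this, we vary $w$ itself as well. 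Using the halting oracle, search for two extensions $u_1, u_2$ of $t_s$ of length $\mathbf{p}in$ whose length-$\mathbf{p}jn$ prefixes differ, which is possible since $\mathbf{p}jn > m$. Any $u$ on which $f$ diverges, or on which $f$ outputs something other than its own prefix, works. Otherwise, $f$ converges on both $u_1$ and $u_2$ and outputs their respective prefixes. Then take $u_1$ together with the fact that $f u_1 = \mathrm{prefix}(u_1)$.

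That last configuration fails, so a different approach is needed. This is where I expect the real difficulty: \emph{a priori} $f$ may simply be the truncation map, and then the requirement with $\mathbf{p}jn < \mathbf{p}in$ cannot be met for that $n$. The resolution must be the freedom to choose $n$. Since $\mathbf{p}in < \mathbf{p}jn$ iff the parity/ordering rule in the definition of $\mathbf{p}$ puts $i$ before $j$ on the shell of $n$, and that order flips with the parity of $n$, we can always choose $n$, say of suitable parity and large, so that $\mathbf{p}in < \mathbf{p}jn$. This reduces everything to the easy first case. The specific definition of $\mathbf{p}$ was made precisely to allow this. Checking the parity claim from the formula for $\mathbf{p}$ is the main computation to verify.
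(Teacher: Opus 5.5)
Your argument is correct and is essentially the paper's proof: a finite-extension construction carried out by arithmetic comprehension, where at each stage you pick a large $n > i, j$ with $\mathbf{p}in < \mathbf{p}jn$ (exactly \autoref{lem:pairing-property}), fix $f$'s input at length $\mathbf{p}in$, and diagonalise on the extension of length $\mathbf{p}jn$ --- so the exploratory case $\mathbf{p}jn < \mathbf{p}in$ can simply be deleted. The parity claim you leave open is a one-line check: for $a < n$ one gets $\mathbf{p}an = n^2 + 2n - a$ if $n$ is even and $\mathbf{p}an = n^2 + a$ if $n$ is odd, so take $n$ odd when $i < j$ and $n$ even when $i > j$ (the paper's proof of that lemma states these parities the other way round, which does not affect the lemma itself).
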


This construction is possible due to our specific choice of the pairing function $\mathbf{p}$ in \autoref{sec:real-model}. Observe that

\begin{lemma}
  \label{lem:pairing-property}
  For any $i, j \in \mathbb{N}$ where $i \neq j$, there exist arbitrarily large $n \in \mathbb{N}$ such that $\mathbf{p}in < \mathbf{p}jn$.
\end{lemma}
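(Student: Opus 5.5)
The plan is to compute $\mathbf{p}in$ and $\mathbf{p}jn$ directly from the definition of $\mathbf{p}$ for every $n$ larger than both $i$ and $j$. The point is that the parity case split in the definition of $\mathbf{p}$ depends only on $\mathrm{max}\mleft\{a, b\mright\}$. With the second argument $n$ dominating, both values fall into the same parity clause, and that clause compares $i$ and $j$ in a direction fixed by the parity of $n$.

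Concretely, fix $i \neq j$ and consider any $n > \mathrm{max}\mleft\{i, j\mright\}$. Then $\mathrm{max}\mleft\{i, n\mright\} = \mathrm{max}\mleft\{j, n\mright\} = n$.

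\emph{Case $n$ even.} The first clause gives
\[\mathbf{p}in = n \cdot \left(n + 1\right) - i + n, \qquad \mathbf{p}jn = n \cdot \left(n + 1\right) - j + n.\]
Hence $\mathbf{p}in < \mathbf{p}jn$ if and only if $j < i$.

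\emph{Case $n$ odd.} The second clause gives
\[\mathbf{p}in = n \cdot \left(n + 1\right) + i - n, \qquad \mathbf{p}jn = n \cdot \left(n + 1\right) + j - n.\]
Hence $\mathbf{p}in < \mathbf{p}jn$ if and only if $i < j$.

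Since $i \neq j$, exactly one of $i < j$ and $j < i$ holds. If $i < j$, every odd $n > \mathrm{max}\mleft\{i, j\mright\}$ works. If $j < i$, every even such $n$ works. Either class contains arbitrarily large numbers, which proves the lemma. We also get $i, j < n$ for free, which is what the requirements in \autoref{prop:non-comp-path} need.

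There is no real obstacle here; the only care needed is to check that both values fall into the same parity clause, which is guaranteed by choosing $n > \mathrm{max}\mleft\{i, j\mright\}$. The reasoning formalises in $\mathrm{PA}$.
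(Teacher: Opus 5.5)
Your proof is correct and follows the same route as the paper: for $n > \mathrm{max}\{i, j\}$ both values fall into the same parity clause, and the parity of $n$ fixes the direction of the comparison. Your parity assignment is the correct one (odd $n$ when $i < j$, even $n$ when $i > j$; e.g.\ for $i = 0$, $j = 1$ and the even $n = 2$ one gets $\mathbf{p}02 = 8 > 7 = \mathbf{p}12$). The paper's one-line proof states the two cases with the parities swapped, which is harmless because only the existence of such $n$ is used later.
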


\begin{proof}
  From the definition of $\mathbf{p}$ it is easy to check that if $i < j$, then for any even $n > \mathrm{max}\mleft\{i, j\mright\}$ we will have $\mathbf{p}in < \mathbf{p}jn$; otherwise, $i > j$ and for any odd $n > \mathrm{max}\mleft\{i, j\mright\}$ we will have $\mathbf{p}in < \mathbf{p}jn$.
\end{proof}

\begin{proof}[Proof of \autoref{prop:non-comp-path}]
  Finite sequences $t$ satisfying each requirement forms an upwards closed set, and \autoref{lem:pairing-property} ensures that this set will be dense: for any $i, j \in \mathbb{N}$ where $i \neq j$  and sequence $t$, take some $n > i, j, \mathbf{l}_s t$ such that $\mathbf{p}in < \mathbf{p}jn$ (so we also have $\mathbf{p}jn > \mathbf{l}_s t$) and extend $t$ if needed to any $t' \sqsupseteq t$ such that $\mathbf{p}in \leq \mathbf{l}_s t' < \mathbf{p}jn$. Then for any further extension $t'' \sqsupseteq t'$, the value $f\mleft(\mathbf{t}_s t'' \mleft(\mathbf{p}in\mright)\mright) = f\mleft(\mathbf{t}_s t' \mleft(\mathbf{p}in\mright)\mright)$ (or the non-termination thereof) is already determined. Clearly by choosing a suitable extension $t''$ of length $\mathbf{p}jn$, we can make sure that $f$ does not compute $\mathbf{t}_s t'' \mleft(\mathbf{p}jn\mright) = t''$ correctly.

  Therefore, we can enumerate all desired requirements and, by arithmetic comprehension, find a sequence of finite sequences $\left\{t_i\right\}_{i \in \mathbb{N}}$ such that each $t_i$ satisfies the first $i$ requirements and $t_{i + 1} \sqsupseteq t_i$ is an extension. Combining these finite sequences immediately give a path $h : \mathbb{N} \rightarrow \mathbb{N}$ such that every requirement above is satisfied by some finite initial segment of $h$.
\end{proof}

We now take our distinguished type $\overline{X}$ in the realisability model to be given by
\[X = \left\{t \in \mathbb{N} : t \sqsubseteq h\right\},\]
i.e.\ the (coded) finite initial segments of $h$. Consider $\alpha_0 = \mathbf{p}\overline{X}\mleft(\lambda t. \left(\mathbf{l}_s t\right)^V\mright) \in \mathbf{V}$ in the realisability model. We first see that it has the following basic properties:

\begin{lemma}
  The following are realised:
  \begin{enumerate}[label=(\roman*)]
    \item $\alpha_0 \in \Ord$;
    \item $\alpha_0 \subseteq \omega$.
  \end{enumerate}
\end{lemma}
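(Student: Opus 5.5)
We need to realise (i) that $\alpha_0$ is an ordinal, i.e.\ a transitive set of transitive sets, and (ii) that every element of $\alpha_0$ is in $\omega$. The plan is to first record realised facts about the canonical naturals $m^V$, then handle (ii), and then reduce (i) to those facts.

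\textbf{Step 1: facts about the $m^V$.} Both clauses will use two facts, each proved by induction on $m$ with realisers uniform in $m$ (obtained by the recursion theorem). First, each $m^V$ is realised to be a transitive set of transitive sets: an element of $m^V$ is indexed by some $k \din \overline{m}$, so $k < m$, and is $k^V$. Second, the realised equality and membership facts $m^V \in \omega^V$ and $k^V \in m^V$ for $k < m$ hold, witnessed by $\mathbf{p}\, m\, \iota$ and $\mathbf{p}\, k\, \iota$ respectively. Here $\iota$ is the uniform realiser of $\alpha = \alpha$.

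\textbf{Step 2: clause (ii).} To realise $\forall x \in \alpha_0 \ (x \in \omega)$, take $e = \lambda t.\, \mathbf{p}(\mathbf{l}_s t)\iota$. For any $t \din \overline{X}$, the element $\mathbf{p}_1\alpha_0\, t$ is $(\mathbf{l}_s t)^V$. Since $\mathbf{l}_s t \din \overline{\mathbb{N}}$ and $\mathbf{p}_1 \omega^V(\mathbf{l}_s t) = (\mathbf{l}_s t)^V$, the term $\iota$ realises the required equality.

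\textbf{Step 3: clause (i), elements are transitive.} Every element of $\alpha_0$ is some $(\mathbf{l}_s t)^V$, which is transitive by Step 1, applied uniformly in the index computed from $t$.

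\textbf{Step 4: clause (i), transitivity of $\alpha_0$.} This is the one step that depends on $X$, and I expect it to be the only real obstacle, though it is mild. Given $t \din \overline{X}$ and a member of $(\mathbf{l}_s t)^V$ indexed by $k < \mathbf{l}_s t$, I must computably produce some $t' \din \overline{X}$ with $\mathbf{l}_s t' = k$. Since $X$ consists of the initial segments of $h$ and $t \sqsubseteq h$, the truncation $t' = \mathbf{t}_s t k$ is also an initial segment of $h$ with length $k$. The realiser is therefore $\lambda t k.\, \mathbf{p}(\mathbf{t}_s t k)\iota$. This works because $X$ is downward closed under truncation and $\mathbf{t}_s$ is computable, so no knowledge of $h$ itself is needed.

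If $\Ord$ is formulated with set induction on $\in$, that part is realised automatically because set induction holds in the model. Throughout, the only care needed is to unfold the bounded-quantifier clauses of $\Vdash$ correctly; membership in $\mathbf{V}$ of all the objects involved is immediate from rule~4.
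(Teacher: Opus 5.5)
Your proposal is correct and follows the paper's proof: it uses the same realiser $\lambda t.\,\mathbf{p}(\mathbf{l}_s t)\iota$ for (ii) and the same truncation realiser $\lambda tk.\,\mathbf{p}(\mathbf{t}_s tk)\iota$ for transitivity of $\alpha_0$. The one divergence is that you realise transitivity of each element $(\mathbf{l}_s t)^V$ directly, whereas the paper gets this from (ii) by soundness, since $\mathrm{CZF}$ proves every element of $\omega$ is transitive; your explicit realisers there (e.g.\ $\lambda kj.\,\mathbf{p}j\iota$) are in fact independent of $m$, so no induction on $m$ or recursion theorem is needed.
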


\begin{proof}
  First check that
  \[\lambda t. \mathbf{p}\mleft(\mathbf{l}_s t\mright)\iota \Vdash \forall \alpha \in \alpha_0 \ \alpha \in \omega^V,\]
  thus (ii) is realised.

  Now, for (i), observe that for any $t \in X$ and $0 \leq n \leq \mathbf{l}_s t$, $\mathbf{p}\mleft(\mathbf{t}_s tn\mright) \iota \Vdash n^V \in \alpha_0$. Therefore,
  \[\lambda tx. \mathbf{p}\mleft(\mathbf{t}_s tx\mright)\iota \Vdash \forall \alpha \in \alpha_0 \ \forall \beta \in \alpha \ \beta \in \alpha_0,\]
  realising that $\alpha_0$ is transitive. Now it follows immediately from $\alpha_0 \subseteq \omega$ within $\mathrm{CZF}$ that any element in $\alpha_0$ is transitive, thus $\alpha_0 \in \Ord$.
\end{proof}

Next, we use the fact that our pairing function $\mathbf{p}$ is recursive; this means we can write down the same definition inside the realisability model (or even inside $L$) to define a set-theoretic function from $\omega^2$ to $\omega$ that performs exactly the same computations. More explicitly, this set-theoretic pairing function could bear the name
\[\overline{p} = \mathbf{p}\overline{\mathbb{N}} \mleft(\lambda x. \tuple{\tuple{\left(\mathbf{p}_0 x\right)^V, \left(\mathbf{p}_1 x\right)^V}^V, x^V}^V\mright) \in \mathbf{V}.\]

We use $\overline{p}$ inside the realisability model to extract the complexity coded into $\alpha_0$ in \autoref{prop:non-comp-path}. We define the set-theoretic function $f_0 : \omega \rightarrow \mathcal{P}\mleft(\omega\mright)$ by
\[f_0\mleft(i\mright) = \left\{n \in \omega : \forall k \in n \ \overline{p}\mleft(i, k\mright) \in \alpha_0\right\}.\]
It is easy to check within $\mathrm{CZF}$ that this definition guarantees each $f_0\mleft(i\mright) \in \Ord$ as well. We also show that

\begin{proposition}
  \label{prop:f-subset-eq}
  For any $i, j \in \mathbb{N}$, if some $d \Vdash f_0\mleft(i^V\mright) \subseteq f_0\mleft(j^V\mright)$, then $i = j$. Namely, we trivially have
  \[\lambda ijx. \iota \Vdash \forall i, j \in \omega \left(f_0\mleft(i\mright) \subseteq f_0\mleft(j\mright) \rightarrow i = j\right).\]
\end{proposition}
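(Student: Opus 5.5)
We argue by contraposition: assume $i \neq j$ and that some $d \Vdash f_0\mleft(i^V\mright) \subseteq f_0\mleft(j^V\mright)$, and derive a computable procedure contradicting one of the requirements satisfied by $h$ in \autoref{prop:non-comp-path}. The second sentence of the statement then follows at once. Given $i,j$, the realiser of the implication receives a realiser $x$ of the inclusion. If the hypothesis is realised at all, the first part gives $i=j$, so $i^V$ and $j^V$ are literally the same element of $\mathbf{V}$ and $\iota \Vdash i^V = j^V$. Thus $\lambda ijx.\iota$ works, with the realisability of $\forall i, j \in \omega$ unwinding through $\omega^V = \mathbf{p}\overline{\mathbb{N}}\mleft(\lambda x. x^V\mright)$.

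The first step is to unwind what it means to realise $n^V \in f_0\mleft(i^V\mright)$. Membership in $f_0(i)$ requires $\forall k \in n \ \overline{p}(i,k) \in \alpha_0$. A realiser of $\overline{p}(i,k)^V \in \alpha_0$, i.e.\ of $(\mathbf{p}ik)^V \in \alpha_0$, has first component some $t \in X$ with $(\mathbf{l}_s t)^V = (\mathbf{p}ik)^V$ realised. Since equality between numerals $m^V$ is realised only when the numbers coincide, this means $t \sqsubseteq h$ of length $\mathbf{p}ik$, i.e.\ $t = h\restriction \mathbf{p}ik$. Consequently, a realiser of $n^V \in f_0(i^V)$ computably yields the initial segments $h\restriction \mathbf{p}ik$ for all $k<n$. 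Conversely, given those segments (together with $\iota$ and the computable realiser of $\overline{p}(i,k)=\mathbf{p}ik$), one computably builds such a realiser. The key observation is that this requires only $h \restriction \max_{k<n}\mathbf{p}ik$, which a single segment $h\restriction \mathbf{p}i(n-1)$-or-longer suffices to supply via $\mathbf{t}_s$. More precisely, define a partial computable function sending $s$ to a realiser of $(\mathbf{l}_s s)$-many memberships.

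The second step applies $d$. Choose $n > i,j$ (arbitrarily large) and feed $d$ the index $n$ together with a realiser of $n^V\in f_0(i^V)$ built from $h\restriction \mathbf{p}in$ plus the earlier segments, which are truncations of it when $\mathbf{p}ik\le\mathbf{p}in$ for $k<n$. This holds because $\mathbf{p}ik<\mathbf{p}in$ for $k<n$ by the bijection lemma on $\{0,\dots,n-1\}^2$ ordering, possibly after adjusting: we actually use membership of $(n+1)^V$, requiring segments up to $\mathbf{p}in$, and since $\max(i,k)\le n$ all such codes are below $(n+1)^2$. I expect this index bookkeeping to be the main obstacle, and I would choose the parameter so that all needed lengths are $\le \mathbf{p}in$ except none exceeding it. The output of $d$ then realises $(n+1)^V\in f_0(j^V)$, from which we compute $h\restriction \mathbf{p}jn$.

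The third step assembles the contradiction. The above is uniform in $d$, so there is a single index $f$, depending on $d,i,j$, such that $f(h\restriction\mathbf{p}in)\downarrow = h\restriction \mathbf{p}jn$ for every sufficiently large $n$. This violates the requirement for $\tuple{i,j,f}$, which some initial segment of $h$ satisfies, provided the witnessing $n$ there is in the range where our procedure is valid. To handle this, hardwire the finitely many small cases into $f$. Alternatively, define the procedure for all $n>i,j$ directly, since the requirement already asserts $i,j<n$. The requirement's $n$ and segment length then suffice, giving the contradiction, so $i=j$.
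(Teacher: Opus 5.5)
Your proposal is correct and is essentially the paper's own argument. Realisers of $\overline{p}\mleft(i^V, k^V\mright) \in \alpha_0$ are unwound to initial segments of $h$, and a realiser of $\left(n+1\right)^V \in f_0\mleft(i^V\mright)$ is built from the single segment of length $\mathbf{p}in$ (as $\mathbf{p}ik \leq \mathbf{p}in$ for all $k \leq n$ once $n > i$); applying $d$ then yields the segment of length $\mathbf{p}jn$, contradicting the requirement for the resulting index.

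The one step you leave implicit, which the paper spells out, is that the single-input machine recovers $n$ from its argument $s$ as $\mathbf{p}_1\mleft(\mathbf{l}_s s\mright)$. Also, your Step 1 aside that a segment of length $\mathbf{p}i\left(n-1\right)$ suffices is wrong in general (for $n = 2$, $i = 1$ one has $\mathbf{p}10 = 3 > 2 = \mathbf{p}11$), but your final choice of $\left(n+1\right)^V$ with $n > i$ makes it irrelevant.
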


\begin{proof}
  Assume otherwise, i.e.\ $i \neq j$. Within $\mathrm{CZF}$, we know that $f_0\mleft(i^V\mright) \subseteq f_0\mleft(j^V\mright)$, by definition, is equivalent to
  \[\forall n \in \omega \left(\forall k \in n \ \overline{p}\mleft(i^V, k\mright) \in \alpha_0 \rightarrow \forall k \in n \ \overline{p}\mleft(j^V, k\mright) \in \alpha_0\right),\]
  so suppose that this is realised by some $d \in \mathbb{N}$. Now, we take the formula $\overline{p}\mleft(i^V, k^V\mright) \in \alpha_0$ as a shorthand for
  \[\exists x \in \overline{p} \ \exists n \in \alpha_0 \ x = \tuple{\tuple{i^V, k^V}, n},\]
  thus if some $e \Vdash \overline{p}\mleft(i^V, k^V\mright) \in \alpha_0$, then $\mathbf{p}_0 e = \mathbf{p}ik$, and $\mathbf{p}_0 \mleft(\mathbf{p}_1 e\mright)$ must be the unique finite sequence in $X$ of length precisely $\mathbf{p}ik$.

  Conversely, there must also be some function $q \in \mathbb{N}$ in the pca such that, suppose $t \in X$ is any finite sequence satisfying $\mathbf{l}_s t \geq \mathbf{p}ik$,
  \[\mathbf{p} \mleft(\mathbf{p}ik\mright) \mleft(\mathbf{p} \mleft(\mathbf{t}_s t \mleft(\mathbf{p}ik\mright)\mright) \mleft(qik\mright)\mright) \Vdash \overline{p}\mleft(i^V, k^V\mright) \in \alpha_0.\]
  (Here we will omit the cumbersome process of writing out the internal structure of realisers for the defining formula of set-theoretic ordered pairs, but it is routine to check that this can be done.)

  Consequently, for any $n > \mathrm{max}\mleft\{i, j\mright\}$, let $t \in X$ be any long enough finite sequence such that $\mathbf{p}in, \mathbf{p}jn \leq \mathbf{l}_s t$. Observe that our stipulation of the pairing function $\mathbf{p}$ also ensures $\forall k \leq n \ \mathbf{p}ik \leq \mathbf{p}in$. Therefore,
  \[\lambda k. \mathbf{p} \mleft(\mathbf{p}ik\mright) \mleft(\mathbf{p} \mleft(\mathbf{t}_s t \mleft(\mathbf{p}ik\mright)\mright) \mleft(qik\mright)\mright) \Vdash \forall k \in \left(\mathbf{s}_N n\right)^V \overline{p}\mleft(i^V, k\mright) \in \alpha_0.\]
  In other words, if we take $f = \lambda nt. d\mleft(\mathbf{s}_N n\mright) \mleft(\lambda k. \mathbf{p} \mleft(\mathbf{p}ik\mright) \mleft(\mathbf{p} \mleft(\mathbf{t}_s t \mleft(\mathbf{p}ik\mright)\mright) \mleft(qik\mright)\mright)\mright) n$, then we have $fnt \Vdash \overline{p}\mleft(j^V, n^V\mright) \in \alpha_0$ and $\mathbf{p}_0 \mleft(\mathbf{p}_1 \mleft(fnt\mright)\mright) = \mathbf{t}_s t \mleft(\mathbf{p}jn\mright)$.

  Observe that $\mathbf{l}_s \mleft(\mathbf{t}_s t \mleft(\mathbf{p}in\mright)\mright) = \mathbf{p}in$, i.e.\ $\mathbf{p}_1 \mleft(\mathbf{l}_s \mleft(\mathbf{t}_s t \mleft(\mathbf{p}in\mright)\mright)\mright) = n$. This means we can have a single-parameter function $g = \lambda s. \mathbf{p}_0\mleft(\mathbf{p}_1\mleft(f \mleft(\mathbf{p}_1 \mleft(\mathbf{l}_s s\mright)\mright) s\mright)\mright)$, such that
  \[g\mleft(\mathbf{t}_s t \mleft(\mathbf{p}in\mright)\mright) {\downarrow} \land g\mleft(\mathbf{t}_s t \mleft(\mathbf{p}in\mright)\mright) = \mathbf{t}_s t \mleft(\mathbf{p}jn\mright).\]

  However, this recursive $g$ is now universal for any appropriate $n$ and long enough $t$, whereas in our construction of $X$ we have required that there exist $n$ and $t$ where $g$ does not compute $\mathbf{t}_s t \mleft(\mathbf{p}jn\mright)$ correctly. Therefore, $i \neq j$ implies a contradiction and thus we must have $i = j$.
\end{proof}

\section{The coding mechanism}

We now work entirely within the realised theory of $\mathrm{CZF} + \text{Subcountability}$ with our distinguished ordinal $\alpha_0$ to prove that $V = L$. Firstly, we show that
\begin{lemma}[$\mathrm{CZF}$]
  \label{lem:subset-omega-l}
  $\mathcal{P}\mleft(\omega\mright) \cap \Ord \subseteq L$. Namely, for any ordinal $\alpha \in \mathcal{P}\mleft(\omega\mright)$, we have $\alpha = L_\alpha \cap \Ord$.
\end{lemma}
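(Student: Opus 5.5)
The plan is to reduce everything to facts about the finite levels $L_n$ for $n \in \omega$. These levels behave classically because they are finite sets with decidable membership and equality. Fix an ordinal $\alpha \subseteq \omega$. Every $\beta \in \alpha$ is then a natural number, so
\[L_\alpha = \bigcup_{n \in \alpha} \dd\mleft(L_n\mright),\]
and both inclusions of $\alpha = L_\alpha \cap \Ord$ can be handled with these finite levels.

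\textbf{Preliminary claim.} First I would prove, by induction on $n \in \omega$ (only $\Sigma$-induction on $\omega$, available in $\mathrm{CZF}$), the following three facts:
\begin{enumerate}[label=(\alph*)]
  \item $L_n$ is a finite set, in bijection with some natural number;
  \item membership and equality between elements of $L_n$ are decidable;
  \item $L_n \cap \Ord = n$.
\end{enumerate}
The key point for (a) and (b) is this. Over a finite structure with decidable atomic relations, every first-order formula, with quantifiers ranging over the finite set, is decidable. Hence $\dd\mleft(L_n\mright)$ consists of decidable subsets of $L_n$. There are only finitely many such subsets, and equality between them is decidable by extensionality. Then $L_{n+1} = \bigcup_{m \leq n} \dd\mleft(L_m\mright)$ is a finite union of finite decidable families, so it is again finite with decidable $\in$ and $=$. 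For (c), an ordinal in $L_{n+1}$ is a definable subset $x \subseteq L_m$ for some $m \leq n$, so $x \subseteq L_m \cap \Ord = m$. Moreover $x$ is a \emph{decidable} transitive subset of $m$. A decidable transitive subset of a natural number is itself a natural number $\leq m$; this can be shown by a routine induction on $m$. Conversely, each $m \leq n$ lies in $L_{n+1}$, because $m = L_m \cap \Ord$ is definable in $\tuple{L_m;\in}$ by the $\Delta_0$ formula ``$x$ is transitive and every element of $x$ is transitive'', whose quantifiers are bounded inside $L_m$ thanks to transitivity of $L_m$.

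\textbf{Main inclusions.} With the claim in hand I would prove the two inclusions.
\begin{itemize}
  \item $\alpha \subseteq L_\alpha \cap \Ord$: if $n \in \alpha$, then $n = L_n \cap \Ord \in \dd\mleft(L_n\mright) \subseteq L_\alpha$, as shown above.
  \item $L_\alpha \cap \Ord \subseteq \alpha$: take an ordinal $x \in \dd\mleft(L_n\mright)$ with $n \in \alpha$. Exactly as in the inductive step, $x$ is a decidable transitive subset of $n$, hence $x = m$ for some $m \leq n$. Now $m = n$ gives $m \in \alpha$ directly, and $m < n$ gives $m \in \alpha$ by transitivity of $\alpha$. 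The case split $m = n$ or $m < n$ is legitimate because the order on $\omega$ is decidable.
\end{itemize}

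\textbf{Main obstacle.} The hard step is the preliminary claim, in particular making rigorous in $\mathrm{CZF}$ that satisfaction in a finite, decidable structure is decidable, so that definable subsets of $L_n$ are decidable. I would try first to handle this by an induction on the (coded) formulas. In that induction the bounded quantifiers over a finite set become finite conjunctions and disjunctions, which preserve decidability. The remaining facts, that decidable subsets of $n$ which are transitive are natural numbers and that $\dd$ of a finite decidable set is finite, are routine finite combinatorics.
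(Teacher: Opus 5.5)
Your argument is correct, but it takes a different route from the paper's.

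\textbf{The paper's route.} The paper does no decidability analysis of the finite levels. It takes Lubarsky's theorem $L_\omega \cap \Ord = \omega$ as a black box. It combines this with the fact that every ordinal in $L_{n^+}$ is a subset of $L_n \cap \Ord$, and gets $L_n \cap \Ord = n$ by induction. For a general ordinal $\alpha \subseteq \omega$ it then uses the auxiliary set $\alpha^* = \bigcup_{\beta \in \alpha} \beta^{++} \subseteq \omega$. It shows $\alpha = \bigcup \alpha^*$, so that $L_\alpha = \bigcup_{n \in \alpha^*} L_n$, and only the values $L_n \cap \Ord$ are ever needed.

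\textbf{Your route.} You reprove the needed part of Lubarsky's result from scratch, by showing that the finite levels are finite with decidable $\in$ and $=$. This gives you directly that every ordinal in $\dd(L_n)$ is some $m \leq n$, with $n$ itself among them. So you can work with $L_\alpha = \bigcup_{n \in \alpha} \dd(L_n)$ as it stands and skip the $\alpha^*$ detour. The paper's proof is shorter because of the citation; yours is self-contained and makes explicit why the finite levels behave classically.

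\textbf{One step needs more justification.} You argue that $\dd(L_m)$ consists of decidable subsets of $L_m$, of which there are only finitely many. It does not follow constructively that $\dd(L_m)$ is finite, because a subcollection of a finite set need not be finite. Yet finiteness of $L_{n+1}$ is exactly what keeps bounded quantifiers over it decidable at the next stage of your induction.

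The fix is easy. A decidable subset $A$ of the finite set $L_m$ is itself finite, say $A = \{a_1, \ldots, a_k\}$. So $A$ is defined over $\tuple{L_m; \in}$ by $x = a_1 \lor \cdots \lor x = a_k$ with parameters (or by $x \neq x$ when $k = 0$). Hence $\dd(L_m)$ is precisely the finite set of all decidable subsets of $L_m$.
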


\begin{proof}
  For any ordinal $\alpha \in \mathcal{P}\mleft(\omega\mright) \cap \Ord$, we consider $\alpha^* = \bigcup_{\beta \in \alpha} \beta^{++}$. Here, each $\beta^{++} \in \omega$, so we know $\alpha^* \subseteq \omega$. For any $\beta \in \alpha$, $\beta^+ \in \alpha^*$ and thus $\beta \in \bigcup \alpha^*$. Conversely, for any $\beta \in \bigcup \alpha^*$, there exists $\gamma \in \alpha$ and $\delta \in \gamma^{++}$ that $\beta \in \delta$, which implies $\beta \in \gamma^+ \subseteq \alpha$. Therefore, $\alpha = \bigcup \alpha^*$.

  Now, Lubarsky proved in \cite{lubarsky93-intuitionistic-l} that precisely $L_\omega \cap \Ord = \omega$. Using the additional fact that for any $n \in \omega$, any $\alpha \in L_{n^+} \cap \Ord$ is a subset of $L_n \cap \Ord$, we can easily show by induction that $\forall n \in \omega \ L_n \cap \Ord = n$. This implies
  \[L_\alpha \cap \Ord = L_{\bigcup \alpha^*} \cap \Ord = \bigcup_{n \in \alpha^*} L_n \cap \Ord = \bigcup \alpha^* = \alpha. \qedhere\]
\end{proof}

This means that $\alpha_0 \subseteq \omega$ defined in \autoref{sec:dist-ord} lies in $L$. Since $\overline{p}$ is recursive and $f_0$ is defined from $\alpha_0$ and $\overline{p}$, correspondingly $f_0 \in L$.

\begin{proposition}
  \label{prop:omega-power-in-l}
  In the realisability model, we have $\mathcal{P}\mleft(\omega\mright) \subseteq L$.
\end{proposition}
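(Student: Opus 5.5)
The plan is to code an arbitrary $x \subseteq \omega$ by a single ordinal built from the strongly incomparable sequence $f_0$, show that this ordinal lies in $L$, and then recover $x$ by a $\Delta_0$ definition from that ordinal and $f_0$. Throughout we work inside the realised theory $\mathrm{CZF} + \text{Subcountability}$ together with \autoref{prop:f-subset-eq}. We already know $f_0 \in L$ by the remark following \autoref{lem:subset-omega-l}. Given $x \subseteq \omega$, we will set
\[\gamma_x = \bigcup_{i \in \omega} f_0\mleft(i\mright) \cup \left\{f_0\mleft(i\mright) : i \in x\right\}.\]
This is a set by union, replacement and pairing in $\mathrm{CZF}$. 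Each $f_0\mleft(i\mright)$ is an ordinal contained in $\omega$, so every element of $\gamma_x$ is a transitive set of transitive sets. Moreover $\gamma_x$ is itself transitive: an element of $f_0\mleft(i\mright)$ is some $m \in f_0\mleft(i\mright)$, and its elements lie in $f_0\mleft(i\mright)$ by transitivity. Hence $\gamma_x \in \Ord$.

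Next we check that $x = \left\{i \in \omega : f_0\mleft(i\mright) \in \gamma_x\right\}$. The inclusion from left to right is immediate. Conversely, suppose $f_0\mleft(i\mright) \in \gamma_x$. There are two cases.
\begin{itemize}
  \item If $f_0\mleft(i\mright) = f_0\mleft(j\mright)$ for some $j \in x$, then $f_0\mleft(i\mright) \subseteq f_0\mleft(j\mright)$, so $i = j \in x$ by \autoref{prop:f-subset-eq}.
  \item If $f_0\mleft(i\mright) \in f_0\mleft(k\mright)$ for some $k$, then transitivity of $f_0\mleft(k\mright)$ gives $f_0\mleft(i\mright) \subseteq f_0\mleft(k\mright)$. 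So $i = k$ by \autoref{prop:f-subset-eq}, and then $f_0\mleft(i\mright) \in f_0\mleft(i\mright)$, which contradicts set induction. In this case $i \in x$ holds vacuously.
\end{itemize}
The case split is harmless intuitionistically, since it is just an elimination of the disjunction given by membership in a union. Thus $x$ is $\Delta_0$-definable from the parameters $\omega$, $f_0$ and $\gamma_x$. Once $\gamma_x \in L$, $\Delta_0$-separation in $L$ (which holds because $L$ is a model of $\mathrm{IKP}$ by Lubarsky) gives $x \in L$. Alternatively, we can choose $\delta$ with $\omega, f_0, \gamma_x \in L_\delta$ and note $x \in \dd\mleft(L_\delta\mright)$.

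The main obstacle is showing that $\gamma_x \in L$, because in $\mathrm{CZF}$ an ordinal need not lie in $L$. I would imitate \autoref{lem:subset-omega-l} and aim for $\gamma_x \subseteq L_{\gamma_x}$ and then $\gamma_x \in L$.
\begin{itemize}
  \item Every $\beta \in \gamma_x$ is an ordinal in $\mathcal{P}\mleft(\omega\mright)$, so $L_\beta \cap \Ord = \beta$ by \autoref{lem:subset-omega-l}.
  \item Hence $\beta = \left\{y \in L_\beta : y \text{ is an ordinal}\right\} \in \dd\mleft(L_\beta\mright) \subseteq L_{\gamma_x}$, which gives $\gamma_x \subseteq L_{\gamma_x}$.
  \item To get $\gamma_x$ itself into $L$, I would first try to prove $L_{\gamma_x} \cap \Ord = \gamma_x$ directly. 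Any ordinal in $\dd\mleft(L_\beta\mright)$ is a subset of $L_\beta \cap \Ord = \beta$, hence lies in $\mathcal{P}\mleft(\omega\mright) \cap \Ord$.
  \item I would then need to argue that it already belongs to $\gamma_x$. This step is delicate and may fail intuitionistically.
\end{itemize}
If it does fail, the fallback is to define $\gamma_x$ inside some $L_\delta$ by the displayed formula. For that, $x$ would have to be replaced by a parameter already known to be in $L$, for instance an ordinal subset of $\omega$ obtained via \autoref{lem:subset-omega-l}. The fallback would therefore require tweaking the coding, for example by taking unions of the $f_0\mleft(i\mright)$, so that the coding ordinal itself lies in $\mathcal{P}\mleft(\omega\mright) \cap \Ord$.
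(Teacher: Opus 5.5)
Your coding $\gamma_x$ and the decoding $x=\{i\in\omega : f_0(i)\in\gamma_x\}$ are correct. However, the proposal stops at exactly the step that carries the whole difficulty, and as it stands the reduction is circular. $\gamma_x$ is $\Delta_0$-definable from $x$, $\omega$ and $f_0$, so, given $f_0\in L$, the claim $\gamma_x\in L$ is equivalent to $x\in L$.

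The route you sketch, proving $L_{\gamma_x}\cap\Ord=\gamma_x$, does fail in this model. For $i\in x$, the ordinal $\delta=\{n\in\omega : n+1\in f_0(i)\}$ belongs to $\dd(L_{f_0(i)})\subseteq L_{\gamma_x}$, yet $\delta\in\gamma_x$ is not realised:
\begin{enumerate}[label=(\roman*)]
  \item $\delta$ is not a natural number;
  \item since $\delta\subseteq f_0(i)$, \autoref{prop:f-subset-eq} leaves $f_0(i)$ as the only $f_0(j)$ it could equal;
  \item a realiser of $f_0(i)\subseteq\delta$ would compute $h$, which the requirements rule out.
\end{enumerate}
The fallback of coding by $\bigcup_{i\in x}f_0(i)$ does not work either, because strong incomparability gives no control over $f_0(j)\subseteq\bigcup_{i\in x}f_0(i)$. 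In fact, in this model $\bigcup_{i\neq j}f_0(i)=\bigcup_{i}f_0(i)$ for every $j>0$, since $\mathbf{p}(j+1)k<\mathbf{p}jk$ for even $k$ and $\mathbf{p}(j-1)k<\mathbf{p}jk$ for odd $k$ once $k>j+1$.

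The missing idea is that you never need $L_\gamma\cap\Ord=\gamma$. Simply use $L_\gamma\cap\Ord$ itself as the code: it lies in $\dd(L_\gamma)\subseteq L$ for every ordinal $\gamma$, whether or not $\gamma\in L$. The paper takes $\gamma=\bigcup_{n\in u}f_0(n)^+$, i.e.\ $\alpha=\bigcup_{n\in u}\dd(L_{f_0(n)})\cap\Ord\in L$. It then recovers $u$ as $v=\{n\in\omega : f_0(n)\in\alpha\}\in\dd(L_\eta)$.
\begin{enumerate}[label=(\roman*)]
  \item $u\subseteq v$, because $f_0(n)=L_{f_0(n)}\cap\Ord\in\dd(L_{f_0(n)})$ by \autoref{lem:subset-omega-l}.
  \item $v\subseteq u$, because any ordinal in $\dd(L_{f_0(n)})$ is a subset of $L_{f_0(n)}\cap\Ord=f_0(n)$. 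So $f_0(m)\in\alpha$ yields $f_0(m)\subseteq f_0(n)$ for some $n\in u$, whence $m=n$.
\end{enumerate}
These are exactly the two observations in your last list of bullets. Decoding only needs the upper bound ``every ordinal in $\dd(L_\beta)$ is contained in $\beta$'', not the equality you were trying to prove. Your own $\gamma_x$ would also work with $L_{\gamma_x}\cap\Ord$ in place of $\gamma_x$, by the same case analysis you already carried out.
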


\begin{proof}
  For any $u \subseteq \omega$, we consider $\alpha = \bigcup_{n \in u} \dd\mleft(L_{f_0\mleft(n\mright)}\mright) \cap \Ord$. Observe that
  \[\alpha = \bigcup_{n \in u} L_{f_0\mleft(n\mright)^+} \cap \Ord = L_{\bigcup_{n \in u} f_0\mleft(n\mright)^+} \cap \Ord \in L.\]
  We take some large enough $\eta \in \Ord$ such that $f_0, \omega, \alpha \in L_\eta$, and let
  \[v = \left\{n \in \omega : f_0\mleft(n\mright) \in \alpha\right\} \in \dd\mleft(L_\eta\mright).\]
  Since each $f_0\mleft(n\mright) \in \mathcal{P}\mleft(\omega\mright)$, by \autoref{lem:subset-omega-l}, also each $f_0\mleft(n\mright) = L_{f_0\mleft(n\mright)} \cap \Ord \in \dd\mleft(L_{f_0\mleft(n\mright)}\mright) \cap \Ord$, thus $u \subseteq v$. On the other hand, for any $m \in v$, there exists $n \in u$ such that $f_0\mleft(m\mright) \in \dd\mleft(L_{f_0\mleft(n\mright)}\mright) \cap \Ord$, i.e.\ $f_0\mleft(m\mright) \subseteq L_{f_0\mleft(n\mright)} \cap \Ord = f_0\mleft(n\mright)$. This immediately implies $m = n$ by \autoref{prop:f-subset-eq} and thus $m \in u$. Therefore, $u = v \in L$.
\end{proof}

\begin{theorem}
  In the realisability model, we have $V = L$.
\end{theorem}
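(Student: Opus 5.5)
We prove $\forall x \ x \in L$ inside the realised theory $\mathrm{CZF} + \text{Subcountability}$, together with the facts about $\alpha_0$ and $f_0$ established above. The argument is by set induction on $x$. Assume every $y \in x$ lies in $L$; we show $x \in L$.

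First, by Subcountability, choose $u \subseteq \omega$ and a surjection $g : u \rightarrow x$. The aim is to code $g$ by a subset of $\omega$ and use \autoref{prop:omega-power-in-l}. Every value $g(n)$ is in $L$, hence in some $L_\beta$. By strong collection applied to $\forall n \in u \ \exists \beta \in \Ord \ g(n) \in L_\beta$, we obtain a set of ordinals $B$, and its union $\gamma = \bigcup B$ satisfies $g(n) \in L_\gamma$ for all $n \in u$. Since $L_\gamma \in L$, \autoref{prop:omega-power-in-l} together with Subcountability applied to $L_\gamma$ lets us replace $g$ by a subset of $\omega$ in $L$. Concretely, $L_\gamma$ is itself subcountable, but the surjection we obtain need not lie in $L$. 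The cleaner route is to avoid enumerating $L_\gamma$ at all: since $x \subseteq L_\gamma$, it suffices to show $x \in L$ whenever $x$ is a subset of some $L_\gamma$.

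To do that, code membership by sets of naturals. Work with an arbitrary surjection $s : w \rightarrow L_\gamma$ with $w \subseteq \omega$, given by Subcountability; $s$ need not be in $L$. Define
\[E = \left\{\overline{p}(m,n) : m, n \in w,\ s(m) \in s(n)\right\}, \qquad c = \left\{m \in w : s(m) \in x\right\}.\]
Both $w$ and $E$ are subsets of $\omega$, and $c$ is one as well, since it is a subset of $w$ cut out by a bounded formula. By \autoref{prop:omega-power-in-l}, $w, E, c \in L$. We then need to recover $s$ or $x$ inside $L$ by a Mostowski-style collapse of the structure $\langle w; E\rangle$.

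This collapse is the main obstacle. Intuitionistically, $E$ is extensional and well-founded only in the constructive sense, and we need the collapse $\pi : w \rightarrow V$, defined by $\pi(n) = \{\pi(m) : \overline{p}(m,n) \in E\}$, to be computed inside $L$. Set recursion on $E$ is available in $L$ via $\Sigma$-recursion in $\mathrm{IKP}$, but only once well-foundedness of $E$ holds in $L$; that amounts to transfinite induction for arbitrary formulas along $E$. We hope to sidestep this: define $\pi$ in $V$, where $\pi = s$ up to identity, and prove by set induction on $s(n)$ that the $L$-computed partial collapse agrees with $s$. Alternatively, we define the collapse inside $L$ by $\in$-recursion along the ordinal rank given by $L_\gamma$.

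If the collapse is too delicate, a fallback avoids it. Use the injective sequence $f_0$ and the trick of \autoref{prop:omega-power-in-l} directly. Each $y \in x$ lies in $L$, so it has an $L$-definition from parameters in some $L_\beta$. The subset $c' = \{\overline{p}(\ulcorner\varphi\urcorner, m) : \dots\}$ of $\omega$ records, for each $y \in x$, a formula and a parameter index defining it. This $c'$ lies in $L$ by \autoref{prop:omega-power-in-l}, and then $x = \{y \in L_\gamma : \exists k \in c' \ y \text{ is defined by } k\}$ is an element of $\dd(L_\eta)$ for a sufficiently large $\eta$. The parameters themselves would have to be indexed using the $L$-internal map $n \mapsto L_{f_0(n)}$, which requires the parameters to come from these levels; that is exactly why the strongly incomparable ordinals $f_0(n)$ are useful. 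I would try this fallback first, since it mirrors the proof of \autoref{prop:omega-power-in-l}.
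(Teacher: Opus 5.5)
Your coding step is sound: $E$ and $c$ are subsets of $\omega$ cut out by $\Delta_0$-separation, so $w, E, c \in L$ by \autoref{prop:omega-power-in-l}. But the proposal stops at the step that carries the theorem. You call the collapse of $\langle w; E\rangle$ inside $L$ the ``main obstacle'', you only \emph{hope} to sidestep it, and you say you would try the fallback first. The fallback fails. Every $f_0(n)$ is a subset of $\omega$, so every $L_{f_0(n)}$ is contained in $L_\omega$, and the map $n \mapsto L_{f_0(n)}$ cannot supply parameters from higher levels $L_\beta$. Naming an arbitrary element of $L_\beta$ by a natural number would need an enumeration of $L_\beta$ that lies in $L$. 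Subcountability does not provide one, as you observed yourself. Naming parameters by formulas with further parameters just rebuilds a coded well-founded tree that $L$ must decode, which is the same collapse problem. The job of $f_0$ ends with \autoref{prop:omega-power-in-l}: it lets $L$ recover a subset of $\omega$ from an ordinal, and it does not index parameters.

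The route you only hoped for is the paper's proof, and it needs no well-foundedness of $E$ inside $L$. Define in $L$ a \emph{partial} collapse by a $\Sigma_1$ formula: $\pi(n) = y$ iff there is a function $h$ whose domain $d \subseteq w$ is closed under $E$-predecessors, with $n \in d$, $h(n) = y$, and $h(k) = \{h(m) : m \in w,\ \overline{p}(m,k) \in E\}$ for all $k \in d$. Then prove in $V$, by set induction on $z$, that for every $n \in w$ with $s(n) = z$, two things hold: every such $h$ with $n \in d$ satisfies $h(n) = s(n)$, and some such $h$ lies in $L$.

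The $E$-predecessors of $n$ are exactly the $m \in w$ with $s(m) \in s(n)$, so the induction hypothesis applies to them. Collection inside $L$ (which satisfies $\mathrm{IKP}$) gathers their witnessing functions into a set in $L$, and these glue together by the uniqueness clause. Transitivity of $L_\gamma$ and surjectivity of $s$ then give $h(n) = s(n)$. Since such functions in $L$ are also such functions in $V$, the $L$-computed $\pi$ agrees with $s$ on $w$. Hence $x = \{\pi(m) : m \in c\} \in L$ by $\Sigma$-replacement inside $L$.

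The paper is slightly more economical. It codes $\trcl(\{x\})$ itself, with index $0$ naming $x$, and reads off $x$ as a single value of the collapse. So your outer set induction and the strong-collection step producing $\gamma$ are not needed.
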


\begin{proof}
  For any set $s$ consider the transitive closure $\trcl\mleft(\left\{s\right\}\mright)$ and correspondingly a surjection $f : u \rightarrow \trcl\mleft(\left\{s\right\}\mright)$ where $u \subseteq \omega$, by the axiom of subcountability. Without loss of generality we can assume conveniently that $0 \in u$ and $f\mleft(0\mright) = s$. We define
  \[\sigma_s = \left\{n \in \omega : \exists i, j \in u \left(f\mleft(i\mright) \in f\mleft(j\mright) \land n = \overline{p}\mleft(i, j\mright)\right)\right\}.\]

  Given $\sigma_s$, we can recursively define some (possibly partial) function $g$ on domain $u$ as
  \[g\mleft(n\mright) = \left\{g\mleft(m\mright) : m \in u \land \overline{p}\mleft(m, n\mright) \in \sigma_s\right\}\]
  if $g\mleft(m\mright)$ exists for all such $m \in u$. The function $g$ is defined by a $\Sigma_1$-formula and parameters $u, \sigma_s$, where $u, \sigma_s \in L$ by \autoref{prop:omega-power-in-l}. Using $\Delta_0$-collection, we can easily prove by set induction on $x$ that, for any set $x$,
  \[\forall n \in u \left(f\mleft(n\mright) = x \rightarrow g\mleft(n\mright) = x \land L \vDash \text{$g\mleft(n\mright)$ exists}\right).\]
  Since $g$, as a $\Sigma_1$-definable function, must be upwards absolute between transitive classes, clearly $s = g\mleft(0\mright) \in L$.
\end{proof}

We have thus shown that $\mathrm{BI}$ realises $\mathrm{CZF} + V = L$. Using the fact that $\mathrm{BI} \equiv_{\mathrm{Con}} \mathrm{CZF}$ (c.f.\ \cite[Theorem 2.2]{rathjen14-czf-lpo}), we effectively know that $\mathrm{CZF}$ is equiconsistent with $\mathrm{CZF} + V = L$.

\section*{Acknowledgements}

The author thanks the support from the UK Engineering and Physical Sciences Research Council [EP/W523860/1] during the preparation of this paper.

\bibliographystyle{elsarticle-num}
\bibliography{\jobname}

\end{document}